\newcommand{\vp}{\varepsilon}
\newtheorem{lemma}{Lemma}[section]
\newtheorem{theorem}{Theorem}[section]
\newtheorem{remark}{Remark}
\def\zz{\mathbb{Z}}
\def\rr{\mathbb{R}}
\def\md{\mathcal{D}}
\def\th{\theta}
\def\vp{\varphi}
\def\la{\langle}
\def\ra{\rangle}
\def\sp{\mathcal{X}^p}
\def\hh{\mathbb{H}^n}
\def\rem{\mathcal{R}}
\title[Asymptotic expansions]{Asymptotic  expansions for  anisotropic heat kernels}
\author[L. I. Ignat and E.  Zuazua]{Liviu I. Ignat and Enrique Zuazua}
\address{L. I. Ignat
\hfill\break\indent Institute of Mathematics ``Simion Stoilow'' of the Romanian Academy\\
\hfill\break\indent  21 Calea Grivitei Street \\010702 Bucharest \\ Romania 
\hfill\break\indent \and
\hfill\break\indent 
 BCAM - Basque Center for Applied Mathematics\\
 \hfill\break\indent  Alameda de Mazarredo, 14
E-48009 Bilbao, Basque Country - Spain.}
 \email{{\tt
liviu.ignat@gmail.com}\hfill\break\indent  {\it Web page: }{\tt
http://www.imar.ro/\~\,lignat}}
\address{E. Zuazua
\hfill\break\indent 
 BCAM - Basque Center for Applied Mathematics\\
 \hfill\break\indent  Alameda de Mazarredo, 14
E-48009 Bilbao, Basque Country - Spain
\hfill\break\indent \and
\hfill\break\indent  Ikerbasque, Basque Foundation for Science,\\
 \hfill\break\indent Alameda Urquijo 36-5, Plaza Bizkaia, 48011, Bilbao, Basque Country, Spain}
  \email{{\tt zuazua@bcamath.org }
\hfill\break\indent {\it Web page: }{\tt http://www.bcamath.org/zuazua/}}
\begin{document}
\maketitle

\begin{abstract}
We obtain the asymptotic expansion of the solutions of some  anisotropic heat equations  
 when the initial data belong to polynomially weighted $L^p$-spaces. We mainly address two model examples.  In the first one, the diffusivity is of order two in some variables but higher in the other ones. 
In the second one we consider the heat equation on the Heisenberg group.  
\end{abstract}

\section{Introduction}

This paper is devoted to the asymptotic behavior as time tends to infinity for the Cauchy problem associated with some anisotropic heat equations, with initial data in polynomially weighted $L^p$ spaces.

There are a number of results on  asymptotic behavior of the classical isotropic heat equation on the whole space:
\begin{equation}\label{heat}
\left\{
\begin{array}{ll}
\displaystyle u_t(z,t)=\Delta u(z,t)& z\in \rr^N, t>0,\\[10pt]
u(z,0)=f(z),&,z\in \rr^N.
\end{array}
\right.
\end{equation}

The key to obtain a complete asymptotic expansion of solutions  as $t\to \infty$  is a decomposition of the initial data on the basis constituted by   Dirac's delta and its derivatives, proved in \cite{MR1183805} and that we recall for the sake of completeness. The asymptotic expansion reads as follows:

\textit{Let $G(\cdot,t)$ be the heat kernel. 
For any $1\leq p\leq N/(N-1)$ and $k\geq 0$ an integer  the solution of problem \eqref{heat} satisfies:
\begin{align*}
\Big\| u(\cdot,t)-\sum _{|\alpha|\leq k}\frac {(-1)^{|\alpha|}}{\alpha !} \Big(\int _{\rr^N} f(z)z^\alpha dz\Big) D^\alpha G(\cdot, t)\Big\| _{L^p(\rr^N)}\lesssim t^{-\frac {k+1}2}\||z|^{k+1} f\|_{L^p(\rr^N)}
\end{align*}
for any initial data $f\in L^1(\rr^N,1+|z|^k)$ such that $|z|^{k+1}f\in L^p(\rr^N)$.}

\medskip

We refer to \cite{MR1183805} for a proof and various extensions. 

According to this expansion the asymptotic behavior of solutions of the heat equation can be described in terms of the Gaussian heat kernel and its space derivatives, using the moments of the initial data as coefficients.

This decomposition is of isotropic nature; all space variables are treated equally, with the same homogeneity. 
This fact  does not permit  dealing with anisotropic heat kernels that would require different decomposition results, weighting the various euclidean variables differently, or even with isotropic heat kernels, as the one associated with the heat equation (\ref{heat}), but with initial data in anisotropically weighted spaces. 

In this paper we present new decomposition results for the initial data that lead both to new results for the heat equation (\ref{heat}) and that, simultaneously, allow handling a number of anisotropic heat kernels.

We focus mainly on two model examples to illustrate the key ideas and the phenomena that emerge due to anisotropy.

%
The first one is  the following diffusion equation, of mixed order, namely order two in some of the space variables, and order four in the other ones:
\begin{equation}\label{heat.an.intro}
\left\{
\begin{array}{ll}
\displaystyle u_t(z,t)=\Delta _x^2 u(z,t)+\Delta _y u(z,t),& z=(x,y)\in \rr^m\times \rr^n, t>0,\\[10pt]
u(z,0)=f(z),&z\in \rr^{m+n}.
\end{array}
\right.
\end{equation}
The solution $u$ can be written in convolution form  as $u(t)=G_t\ast f$ where $G_t$ is the fundamental solution
$$G_t(x,y)=t^{-\frac m2-\frac n4}G_1 (\frac x{t^{1/4}},\frac y{t^{1/2}})$$
where the similarity profile $G_1$ is a smooth function.

Based on  the decomposition obtained in \cite{MR1183805} we get a new one (see Lemma \ref{paso2} below) allowing us to prove the following asymptotic expansion.
\begin{theorem}
\label{culemea}
Let $k$ be a positive odd integer.
For any $f\in L^1(\rr^{m+n},1+|x|^{k+1}+|y|^{\frac{k+1}2})$ the solution $u$ of system \eqref{heat.an.intro}
satisfies
\begin{align}\label{est.mea}
\Big\| u(\cdot,t)- \sum _{|\beta|+2|\gamma|\leq  k}&  \frac{(-1)^{|(\beta,\gamma)|}}{\beta !\gamma !}  \Big(\int _{\rr^N} f(x,y)x^\beta y^\gamma  dxdy\Big)D_{xy}^{\beta \gamma}G_t\Big\| _{L^1(\rr^N)}\\
\nonumber &\lesssim t^{-\frac{k+1}4} \|(1+|x|^{k+1}+|y|^{\frac{k+1}2}) f\|_{L^1(\rr^N)}
\end{align}
\end{theorem}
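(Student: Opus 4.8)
The plan is to reduce the estimate \eqref{est.mea} to a single statement about the initial datum --- an anisotropic decomposition of $f$ into a finite combination of derivatives of Dirac's delta plus a controlled remainder --- and then to propagate that decomposition through the semigroup using only the convolution structure $u(t)=G_t\ast f$ together with the scaling of $G_t$. First I would record the bookkeeping identity. Since $D^{\beta\gamma}_{xy}G_t=G_t\ast D^{\beta\gamma}_{xy}\delta_0$ and the moments $M_{\beta\gamma}[f]:=\int_{\rr^N}f\,x^\beta y^\gamma\,dxdy$ are scalars, the sum subtracted in \eqref{est.mea} is itself $G_t$ convolved with a finite combination of derivatives of $\delta_0$. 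Writing, in the sense of tempered distributions,
\begin{equation*}
f \;=\; \sum_{|\beta|+2|\gamma|\le k}\frac{(-1)^{|(\beta,\gamma)|}}{\beta!\gamma!}\,M_{\beta\gamma}[f]\,D^{\beta\gamma}_{xy}\delta_0 \;+\; g ,
\end{equation*}
one checks that $\langle g,x^\beta y^\gamma\rangle=0$ for every $|\beta|+2|\gamma|\le k$, and that the entire left-hand side of \eqref{est.mea} collapses to $\|G_t\ast g\|_{L^1(\rr^N)}$. Thus everything reduces to exhibiting $g$ and estimating this one norm.

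The key step, which is exactly the content of Lemma \ref{paso2}, is to use the vanishing of the anisotropic moments of $g$ up to degree $k$ in order to represent $g$ as a finite sum of \emph{mixed} derivatives of $L^1$ functions sitting precisely on the first neglected layer:
\begin{equation*}
g \;=\; \sum_{|\beta|+2|\gamma|=k+1} D^{\beta\gamma}_{xy} F_{\beta\gamma},
\qquad
\|F_{\beta\gamma}\|_{L^1(\rr^N)} \;\lesssim\; \big\|(1+|x|^{k+1}+|y|^{\frac{k+1}{2}})f\big\|_{L^1(\rr^N)} .
\end{equation*}
Granting this, I would finish by Young's inequality and the scaling of the profile. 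From $G_t(x,y)=t^{-m/2-n/4}G_1(x/t^{1/4},y/t^{1/2})$ one gets $\|D^{\beta\gamma}_{xy}G_t\|_{L^1}=t^{-(|\beta|+2|\gamma|)/4}\,\|D^{\beta\gamma}_{xy}G_1\|_{L^1}$, the derivatives of $G_1$ being integrable because $G_1$ is smooth and rapidly decreasing (its Fourier transform being, up to the sign convention that makes \eqref{heat.an.intro} dissipative, $e^{-(|\xi|^4+|\eta|^2)}$). Since every index in the remainder has $|\beta|+2|\gamma|=k+1$, each kernel factor decays at the same rate and $G_t\ast g=\sum (D^{\beta\gamma}_{xy}G_t)\ast F_{\beta\gamma}$ gives
\begin{align*}
\|G_t\ast g\|_{L^1(\rr^N)}
&\le \sum_{|\beta|+2|\gamma|=k+1}\|D^{\beta\gamma}_{xy}G_t\|_{L^1(\rr^N)}\,\|F_{\beta\gamma}\|_{L^1(\rr^N)}\\
&\lesssim t^{-\frac{k+1}{4}}\,\big\|(1+|x|^{k+1}+|y|^{\frac{k+1}{2}})f\big\|_{L^1(\rr^N)} ,
\end{align*}
which is precisely \eqref{est.mea}.

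The main obstacle is the displayed decomposition, and this is where the hypothesis that $k$ is odd enters decisively. A naive route --- expanding the kernel in $x$ up to order $k$ and in $y$ up to order $\lfloor k/2\rfloor$ --- produces a product (``box'') of indices rather than the triangle $\{|\beta|+2|\gamma|\le k\}$, and hence overshoots; so the isotropic decomposition of \cite{MR1183805} cannot simply be tensorized. Instead one must organize the remainder by the anisotropic degree $|\beta|+2|\gamma|$ and land it exactly on the layer $|\beta|+2|\gamma|=k+1$. Because $k$ is odd, $k+1$ is even, every index on that layer has $|\beta|=k+1-2|\gamma|$ with $0\le|\gamma|\le (k+1)/2$, and in particular the extreme pure-$y$ index has $|\gamma|=(k+1)/2\in\nn$, so that $D^{\gamma}_y F$ is a genuine derivative of integer order matching the weight $|y|^{(k+1)/2}$; for even $k$ this alignment fails, which is why the statement is restricted to odd $k$. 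The weighted control of each $F_{\beta\gamma}$ then reduces, after the representation of $g$ as higher derivatives of $L^1$ functions, to the elementary interpolation bound $|x|^{|\beta|}|y|^{|\gamma|}\le \frac{|\beta|}{k+1}|x|^{k+1}+\frac{2|\gamma|}{k+1}|y|^{(k+1)/2}$, valid on the layer $|\beta|+2|\gamma|=k+1$, which is exactly how the mixed moments get dominated by the two pure weights appearing on the right-hand side of \eqref{est.mea}.
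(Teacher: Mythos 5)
Your proposal follows the paper's own route: it reduces the theorem to the anisotropic decomposition of Lemma \ref{paso2} (which you correctly identify as the key step, including where the oddness of $k$ enters through the pure-$y$ layer $|\gamma|=\frac{k+1}{2}$), then convolves with $G_t$ and concludes by Young's inequality together with the scaling bound $\|D_x^\beta D_y^\gamma G_t\|_{L^1}\simeq t^{-\frac{|\beta|+2|\gamma|}{4}}$ on the layer $|\beta|+2|\gamma|=k+1$, dominating the mixed weights $|x|^{|\beta|}|y|^{|\gamma|}$ by the same AM--GM inequality the paper uses implicitly. The only (harmless) imprecision is that for $|\gamma|\le\frac{k-1}{2}$ the remainder pieces of Lemma \ref{paso2} are not $L^1(\rr^N)$ functions but distributions of the form $D_x^\beta\big([\rem f]_{\beta\gamma}(x)\big)D_y^\gamma\delta_0(y)$, i.e.\ derivatives of finite measures supported on $\{y=0\}$, so Young's inequality must be applied in the $x$-variable only (or in its version for finite measures, with $\|[\rem f]_{\beta\gamma}\otimes\delta_0\|$ equal to $\|[\rem f]_{\beta\gamma}\|_{L^1(\rr^m)}$), exactly as the paper does --- the resulting bound, and hence your conclusion, is unchanged.
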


Observe that the assumptions on the initial data weigh differently the various euclidean variables, taking into account the inhomogeneity of the diffusion operator. The resulting decomposition is in agreement with the different weight that the euclidean variables have in the fundamental solution, so that the overall contribution of its different derivatives  is balanced.
For simplicity we only have stated the result in  the $L^1(\rr^{m+n})$-norms but similar results can be stated in $L^p(\rr^{{m+n}})$ spaces under some restrictions on the exponent $p$. The decomposition in  \cite{MR1183805} 
would be insufficient to obtain such results since it is of isotropic nature, thus weighting equally all space variables, which is not sufficient to deal with the anisotropy of the heat kernel under consideration. 
The method presented in this paper allows treating separately the euclidean variables with different homogeneity.  

As we shall see, the decomposition used to obtain this result can also be applied in the case of the classical isotropic heat equation, leading to new results on its asymptotic decomposition as $t\to \infty$.

\medskip
The second problem we address is the heat equation on the Heisenberg group:
\begin{equation}\label{heat-heis}
\left\{
\begin{array}{ll}
\displaystyle u_t(z,\theta,t)=\Delta _{\mathbb{H}^n}u(z,\theta,t)& (z,\theta)\in \rr^{2n}\times \rr, t>0,\\[10pt]
u(z,\theta,0)=f(z,\theta),&, (z,\theta)\in \rr^{2n}\times \rr.
\end{array}
\right.
\end{equation}

Let us recall that, 
 as for the  classical heat equation,  the heat semi-group on $\hh$ is given by  the convolution between the initial datum and 
 the fundamental solution, a function in the Schwartz class (see \cite{MR0461589}). 

\begin{theorem}\label{representation}(\cite{MR0461589})
There exists a function $H\in \mathcal{S}(\rr^{2n+1})$  such that, $u$, the solution of the  heat equation on the Heisenberg group \eqref{heat-heis} is given by
$$u(\cdot,t)=f\ast H_t,$$
where $\ast$ is the convolution on the Heisenberg group while $H_t$ is defined by
$$H_t(z,\th)=\frac 1{t^{n+1}}H(\frac {z}{\sqrt t},\frac \th t).$$
Moreover, the function $H$ can be computed explicitly (cf. \cite{MR0461589})
$$H((z,\th))=\frac 1{(4\pi)^{n+1}}\int _\rr \Big (\frac {2\sigma}{\sinh ( {2\sigma })}\Big)^n
\exp\Big(\frac {i\sigma \th}{2}-\frac{|z|^2}{2}\frac {\sigma}{\tanh{2\sigma}}\Big)d\sigma.$$
\end{theorem}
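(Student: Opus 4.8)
The plan is to diagonalize the sub-Laplacian $\Delta_{\hh}$ by means of the group Fourier transform and then to compute the resulting one-parameter semigroup via Mehler's formula for the harmonic oscillator. That the semigroup acts as a convolution on $\hh$ is a general feature of left-invariant, homogeneous operators: since $\Delta_{\hh}$ commutes with left translations and is homogeneous of degree $2$ under the parabolic dilations $\delta_r(z,\th)=(rz,r^2\th)$, the semigroup $e^{t\Delta_{\hh}}$ is a left-invariant convolution operator whose kernel $H_t$ necessarily satisfies the scaling $H_t(z,\th)=t^{-(n+1)}H(z/\sqrt t,\th/t)$, with $n+1=Q/2$ and $Q=2n+2$ the homogeneous dimension. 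It then remains to identify $H=H_1$ explicitly and to verify that $H\in\mathcal{S}(\rr^{2n+1})$.

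To compute $H_1$, first I would take the partial Fourier transform in the central variable $\th$, calling the dual variable $\sigma$. Under this transform the group convolution becomes the $\sigma$-twisted convolution on $\rr^{2n}$, and the sub-Laplacian is converted, fiber by fiber, into a rescaled Hermite (twisted Laplacian) operator $L_\sigma$ on $\rr^{2n}$; equivalently, through the Schrödinger representations $\pi_\sigma$, the operator $\Delta_{\hh}$ acts as a rescaled harmonic oscillator of the form $-(-\Delta+\sigma^2|x|^2)$. The heat equation then decouples into the family of harmonic-oscillator heat equations indexed by $\sigma$.

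The key step is to solve these oscillator heat equations by Mehler's formula, which yields the kernel of $e^{-tL_\sigma}$ as a Gaussian in the $z$-variables with coefficients built from $\cosh$, $\sinh$ and $\tanh$ of $\sigma t$. Specializing to $t=1$ and assembling the $2n$ identical one-dimensional factors produces the prefactor $(2\sigma/\sinh(2\sigma))^n$ together with the Gaussian weight $\exp(-\tfrac{|z|^2}2\,\sigma/\tanh(2\sigma))$. Finally I would invert the Fourier transform in $\th$, which reintroduces the oscillatory factor $\exp(i\sigma\th/2)$ and gives exactly the claimed integral representation of $H$.

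The main obstacle I expect is the bookkeeping in the representation-theoretic reduction: correctly identifying the conjugation that turns $\Delta_{\hh}$ into the rescaled harmonic oscillator, and tracking the constants and the symplectic normalization so that the arguments $2\sigma$ versus $\sigma$, and the factor $\tfrac12$ in $i\sigma\th/2$, come out precisely. Once the explicit integral is in hand, verifying $H\in\mathcal{S}(\rr^{2n+1})$ is routine: the factor $2\sigma/\sinh(2\sigma)$ decays exponentially as $|\sigma|\to\infty$ and is smooth and bounded near $\sigma=0$ (where $\sigma/\tanh(2\sigma)\to\tfrac12$), so the $\sigma$-integral and all of its $z,\th$-derivatives converge absolutely and decay rapidly, yielding membership in the Schwartz class.
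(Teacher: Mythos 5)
This theorem is not proved in the paper at all: it is quoted verbatim from Gaveau \cite{MR0461589}, so there is no internal argument to compare against. Your proposal reconstructs the standard derivation from the cited literature, and in outline it is correct: left-invariance plus homogeneity of degree $2$ of $\Delta_{\hh}$ under $\delta_\lambda(z,\th)=(\lambda z,\lambda^2\th)$ forces the semigroup to be a convolution with a kernel scaling as $t^{-Q/2}H(z/\sqrt t,\th/t)$ with homogeneous dimension $Q=2n+2$, i.e. the exponent $n+1$ in the statement; the partial Fourier transform in $\th$ turns the group convolution into the $\sigma$-twisted convolution and $\Delta_{\hh}$ into the twisted Laplacian, whose heat kernel is given by a Mehler-type Gaussian; inverting in $\sigma$ yields exactly the stated integral with the factor $(2\sigma/\sinh 2\sigma)^n$ and the phase $e^{i\sigma\th/2}$. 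One small correction of terminology: fiber by fiber you do not get the pure harmonic oscillator $-\Delta+\sigma^2|x|^2$ but the twisted Laplacian, which carries an additional rotation term; the rotation is invisible in the twisted convolution kernel (it acts trivially on radial Gaussians), which is why the Mehler computation still produces the clean expression, but your bookkeeping step should account for it explicitly or pass through the Schr\"odinger representation on $L^2(\rr^n)$, where the identification with the oscillator is exact.

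The one genuine soft spot is your verification that $H\in\mathcal{S}(\rr^{2n+1})$. Absolute convergence of the $\sigma$-integral and of its differentiated versions gives boundedness of $H$ and its derivatives, and the bound $\mathrm{Re}\,(\sigma/\tanh 2\sigma)\geq 1/2$ for real $\sigma$ does give uniform Gaussian decay in $z$; but rapid decay in $\th$ does \emph{not} follow from absolute convergence, since the $\th$-dependence enters only through the oscillatory factor $e^{i\sigma\th/2}$. You need to integrate by parts repeatedly in $\sigma$, using that $(2\sigma/\sinh 2\sigma)^n e^{-\frac{|z|^2}{2}\sigma/\tanh 2\sigma}$ is smooth in $\sigma$ with all $\sigma$-derivatives decaying exponentially as $|\sigma|\to\infty$, so that each integration by parts gains a factor $|\th|^{-1}$; alternatively one shifts the contour into the strip $|\mathrm{Im}\,\sigma|<\pi/2$ (bounded by the first pole of $1/\sinh 2\sigma$) to get exponential decay in $|\th|$. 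With that mechanism supplied, your argument is a complete and faithful reconstruction of the result the paper imports from \cite{MR0461589}.
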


In this case
we obtain the following asymptotic expansion. 
\begin{theorem}
\label{asimp.heis}
For any $f\in L^1(\rr^{2n+1},1+|z|^{2}+|\theta|)$ the  solution $u$ of problem \eqref{heat-heis} with initial data $f$ satisfies the
 following 
\begin{align}\label{est.mea.heis}
\Big\| u(\cdot,t)-\Big(\int _{\hh} f\Big) H_t + \sum_{j=1}^{2n} \Big (\int_{\hh}  z_j & f(z,\theta)\Big)Z_j H_t
\Big\| _{L^1(\rr^{2n+1})}\\
\nonumber &\lesssim t^{-1} \|(1+|z|^{2}+|\theta|) f\|_{L^1(\rr^{2n+1})}, \quad t>0,
\end{align}
where $Z_j$, $j=1,\dots, 2n$ are the first $2n$ vector fields of the Lie algebra on $\mathbb{H}^n$.
\end{theorem}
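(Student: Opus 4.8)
The plan is to start from the convolution representation $u(\cdot,t)=f\ast H_t$ of Theorem \ref{representation} and to perform a Taylor expansion of the kernel \emph{along the group}, rather than in the flat Euclidean sense. Writing the Heisenberg convolution as
$$u(p,t)=\int_{\hh} f(q)\,H_t(q^{-1}p)\,dq,\qquad p=(z,\th),\ q=(w,\tau),$$
I would expand $q\mapsto H_t(q^{-1}p)$ around the identity $q=e$. The value at $e$ is $H_t(p)$, which produces the leading term $(\int_{\hh}f)\,H_t$. The crucial point is that differentiating $q\mapsto H_t(q^{-1}p)$ in the direction $w_j$ at $q=e$ simultaneously activates the $z$– and $\th$–slots of $q^{-1}p$ (through the term mixing $w$ and $\tau$ in the group law), and the two contributions assemble into a single object of homogeneity one, namely $-Z_jH_t$, with coefficient $\int_{\hh}z_j f(z,\th)$. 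This is precisely why the decomposition of \cite{MR1183805}, which is of Euclidean (isotropic) nature and would split off a flat $\partial_{z_j}H_t$ together with a \emph{separate} spurious term linear in $z$ times $\partial_\th H_t$, is insufficient: that leftover cross term would be of the larger size $t^{-1/2}$. It must therefore be replaced by a group–adapted decomposition (the $\hh$–analogue of Lemma \ref{paso2}).

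The remaining terms must all be shown to sit at the level $t^{-1}$ of the claimed error, and here the anisotropic homogeneity of $\hh$ is decisive. Under the dilations $\delta_\lambda(z,\th)=(\lambda z,\lambda^2\th)$ the variable $z$ has weight $1$ and $\th$ has weight $2$; from $H_t(z,\th)=t^{-(n+1)}H(z/\sqrt t,\th/t)$ with $H\in\mathcal{S}(\rr^{2n+1})$ one obtains the kernel estimates
$$\|Z_jH_t\|_{L^1(\hh)}\lesssim t^{-1/2},\qquad \|Z_jZ_kH_t\|_{L^1(\hh)}\lesssim t^{-1},\qquad \|\partial_\th H_t\|_{L^1(\hh)}\lesssim t^{-1},$$
because a horizontal field lowers the homogeneity by one while $\partial_\th$ lowers it by two. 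Consequently the first–order $\th$–moment term, whose coefficient is $\int_{\hh}\th f$ and whose kernel is $\partial_\th H_t$, is already of size $t^{-1}$ and may be absorbed into the remainder; this explains why no $\th$–moment appears in the expansion, even though it is a genuine first–order term in the Euclidean count.

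To make the remainder estimate rigorous I would, following the strategy behind Lemma \ref{paso2}, establish a decomposition of $f$ adapted to $\hh$: after subtracting the zeroth– and first–order horizontal moments, the residual contribution $u-(\int_{\hh}f)H_t+\sum_j(\int_{\hh}z_jf)Z_jH_t$ can be written, using the convolution calculus on the group, as $\sum_{j,k}g_{jk}\ast Z_jZ_kH_t+h\ast\partial_\th H_t$, with
$$\sum_{j,k}\|g_{jk}\|_{L^1(\hh)}+\|h\|_{L^1(\hh)}\lesssim \|(1+|z|^2+|\th|)f\|_{L^1(\hh)}.$$
By Young's inequality $\|a\ast b\|_{L^1}\le\|a\|_{L^1}\|b\|_{L^1}$ on $\hh$ together with the kernel estimates above, this is bounded by $t^{-1}\|(1+|z|^2+|\th|)f\|_{L^1(\hh)}$. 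Equivalently, and perhaps more cleanly, one may avoid the abstract decomposition and bound the remainder directly through a quantitative group Taylor formula, estimating $H_t(q^{-1}p)$ minus its first–order expansion by integrating second–order derivatives of $H_t$ along the connecting segment and extracting the weights $|z|^2$ and $|\th|$ from $q$.

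I expect the main obstacle to be the construction of this group–adapted decomposition together with the accompanying homogeneity bookkeeping, which is the genuine novelty over \cite{MR1183805}. The non-commutativity forces the matching of ``moments'' and ``derivatives'' to respect the invariant horizontal structure, so that the single surviving first–order object is $Z_jH_t$ and no spurious cross term of the larger size $t^{-1/2}$ is left behind. Verifying simultaneously that the second horizontal moments and the first $\th$–moment all collapse to the sharp rate $t^{-1}$, while reproducing the non-commutative analogue of the Euclidean ``mean–zero implies divergence form'' construction with $L^1$ bounds controlled by the weighted norm $\|(1+|z|^2+|\th|)f\|_{L^1(\hh)}$, is the technical heart of the argument.
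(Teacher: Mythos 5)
Your proposal is correct and follows essentially the same route as the paper: the decomposition you posit, $u-\big(\int f\big)H_t+\sum_j\big(\int z_jf\big)Z_jH_t=\sum_{j,k}\big(F_{jk}\delta_0(\theta)\big)\ast Z_jZ_kH_t+F\ast\Theta H_t$ with $\|F\|_{L^1}\leq\|\theta f\|_{L^1}$ and $\|F_{jk}\|_{L^1}\leq\|z_jz_kf\|_{L^1}$, is exactly the paper's Theorem \ref{dec.nou.H}, obtained there from the split group Taylor formula of Lemma \ref{dec.heis} (order zero in $\theta$ with first-order remainder, first order in $z$ with second-order remainder), and the conclusion is reached, as you say, by moving the invariant fields onto the kernel and combining Young's inequality with the scaling bounds $\|\Theta H_t\|_{L^1}\simeq\|Z_jZ_kH_t\|_{L^1}\simeq t^{-1}$. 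Only your parenthetical ``cleaner'' variant is imprecise: subtracting the \emph{full} first-order group Taylor expansion of $H_t(q^{-1}p)$ leaves second-order remainders weighted by $|z||\theta|$ and $|\theta|^2$, which the hypothesis $f\in L^1(1+|z|^2+|\theta|)$ does not control, so the anisotropic split you describe first (never expanding past order zero in $\theta$) is in fact unavoidable and is precisely the paper's choice.
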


The results presented here can be extended to more general situations, for example to the 
 heat equation on homogeneous Carnot groups. We refer to  \cite{francesco.rossi} where this issue is addressed, although the results in that paper do not take into account the different homogeneity in what concerns the needed assumptions on the initial data. Note also the existing results for heat equations in heterogeneous media. For instance in \cite{OZ} by combining the results in  \cite{MR1183805} and Bloch-wave expansions a complete asymptotic expansion is given for the heat equation in periodic media. We  also refer to \cite{CARPIO} for similar expansions in the context of
 Vlasov -- Poisson -- Fokker -- Planck equations.

The paper is organized as follows. In Section \ref{dec} we consider the classical isotropic heat equation to illustrate that the use of the new decomposition results we present here leads to new asymptotic expansions as $t \to \infty$ that can not be derived out of  \cite{MR1183805}. In Section \ref{aniso} we consider the anisotropic model \eqref{heat.an.intro}, proving Theorem \ref{culemea} and we compare it with the asymptotic decomposition results one could get out of the isotropic decomposition results in  \cite{MR1183805}.
In the  last section we present some classical facts about the Heisenberg group and give the proof of the corresponding asymptotic expansion stated in Theorem \ref{asimp.heis}.

\section{Decomposition Lemmas}\label{dec}

In this section we obtain some variants of the results in  \cite{MR1183805} on the decomposition of functions defined in the euclidean space in the basis of Dirac's deltas and its derivatives. We shall derive some applications to the asymptotic behavior of the isotropic heat equation for initial data in anisotropically weighted spaces.

In what follows we denote by $\mathcal{D}(\rr^N)$ the space of $C^\infty$ and compactly supported functions and by $\mathcal{D}'(\rr^N)$ its dual.

We first recall the following decomposition \cite{MR1183805}.

\begin{lemma}\label{DZ} (\cite{MR1183805})
Let $k$ be a positive integer. 
For any $f\in \md(\rr^N)$ it holds
\begin{equation}\label{rep.1}
f=\sum _{|\alpha|\leq k}\frac{(-1)^{|\alpha|}}{\alpha !} (\int _{\rr^N} f(x)x^\alpha dx)D^\alpha \delta_0 +
\sum _{|\alpha|=k+1}D^\alpha F_\alpha,
\end{equation}
 where the functions $F_\alpha$, defined for $|\alpha|=k+1$, are given by
\begin{equation}\label{rep.f}
F_\alpha(x)=(k+1)\frac{(-1)^{k+1}}{\alpha !}\int _0^1 (1-t)^{k}(\frac xt)^{\alpha} f(\frac xt)\frac {dt}{t^N}.
\end{equation}
Moreover, for any  $p\in [1,\frac N{N-1})$ the following holds
$$\|F_\alpha\|_{L^p(\rr^N)}\leq \||x|^{k+1}f\|_{L^p(\rr^N)}.$$
\end{lemma}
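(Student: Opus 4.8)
The plan is to verify the representation \eqref{rep.1} as an identity in $\md'(\rr^N)$, that is, to pair both sides with an arbitrary test function $\phi\in\md(\rr^N)$ and check that the two pairings agree; the norm bound on $F_\alpha$ is then a separate, purely real-analytic estimate. First I would use the distributional action $\la D^\alpha\delta_0,\phi\ra=(-1)^{|\alpha|}D^\alpha\phi(0)$, so that the first sum in \eqref{rep.1}, tested against $\phi$, produces exactly $\sum_{|\alpha|\le k}\frac1{\alpha!}\big(\int_{\rr^N} f x^\alpha\big)D^\alpha\phi(0)$. The natural device to reproduce this is Taylor's formula for $\phi$ about the origin to order $k$ with integral remainder, in the form
\[
\phi(x)=\sum_{|\alpha|\le k}\frac{D^\alpha\phi(0)}{\alpha!}x^\alpha+(k+1)\sum_{|\alpha|=k+1}\frac{x^\alpha}{\alpha!}\int_0^1(1-t)^k D^\alpha\phi(tx)\,dt .
\]
Multiplying by $f$ and integrating, the polynomial part matches the Dirac sum term by term.

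It then remains to identify the remainder. Inserting the integral remainder into $\int f\phi$ and interchanging the $x$- and $t$-integrations (legitimate since $f\in\md(\rr^N)$ is bounded with compact support and $\phi$ is smooth), I would carry out, for each fixed $t\in(0,1)$, the substitution $y=tx$, i.e. $x=y/t$, $dx=t^{-N}dy$. This rewrites the remainder contribution as
\[
\sum_{|\alpha|=k+1}\frac{k+1}{\alpha!}\int_{\rr^N}\Big(\int_0^1(1-t)^k\big(y/t\big)^\alpha f\big(y/t\big)\,t^{-N}\,dt\Big)D^\alpha\phi(y)\,dy ,
\]
and by \eqref{rep.f} the inner bracket is precisely $(-1)^{k+1}F_\alpha(y)$. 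Hence the remainder equals $\sum_{|\alpha|=k+1}(-1)^{k+1}\int F_\alpha D^\alpha\phi=\big\la\sum_{|\alpha|=k+1}D^\alpha F_\alpha,\phi\big\ra$, which closes the identity \eqref{rep.1}.

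For the norm bound I would apply Minkowski's integral inequality directly to \eqref{rep.f}, obtaining
\[
\|F_\alpha\|_{L^p(\rr^N)}\le\frac{k+1}{\alpha!}\int_0^1(1-t)^k\big\|(x/t)^\alpha f(x/t)\,t^{-N}\big\|_{L^p_x}\,dt .
\]
Evaluating the inner norm via $y=x/t$ yields $\big\|(x/t)^\alpha f(x/t)t^{-N}\big\|_{L^p_x}=t^{-N/p'}\|x^\alpha f\|_{L^p}$, where $p'$ is the conjugate exponent, so that
\[
\|F_\alpha\|_{L^p(\rr^N)}\le\frac{k+1}{\alpha!}\Big(\int_0^1(1-t)^k t^{-N/p'}\,dt\Big)\|x^\alpha f\|_{L^p(\rr^N)} .
\]
Using $|x^\alpha|\le|x|^{k+1}$ and $\alpha!\ge1$ then bounds the right-hand side by $\||x|^{k+1}f\|_{L^p(\rr^N)}$, up to the value of the Beta integral.

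The crux of the argument is the behaviour near $t=0$: the integral $\int_0^1(1-t)^k t^{-N/p'}\,dt=B(k+1,\,1-N/p')$ converges if and only if $N/p'<1$, i.e. $p<N/(N-1)$, which is exactly the range in the statement, so that this is where the restriction on $p$ is forced. I expect the only delicate points to be (i) checking that the $t^{-N}$ scaling against the degree-$(k+1)$ factor $(x/t)^\alpha$ is genuinely integrable at $t=0$ in the stated range, and (ii) tracking the constant $\frac{k+1}{\alpha!}B(k+1,1-N/p')$, which equals $1/\alpha!\le1$ at $p=1$ and stays finite precisely on $[1,N/(N-1))$. Everything else—the distributional pairing and the Fubini interchange—is routine given that $f,\phi\in\md(\rr^N)$.
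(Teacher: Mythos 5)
Your proof is correct and follows exactly the route the paper indicates for this lemma: Taylor's formula with integral remainder \eqref{taylor} applied to a test function, a Fubini interchange and the substitution $y=tx$ to identify the remainder with $F_\alpha$ in \eqref{rep.f}, and Minkowski's integral inequality plus scaling for the $L^p$ bound, with the Beta integral $\int_0^1(1-t)^k t^{-N/p'}\,dt$ forcing precisely the range $p<N/(N-1)$. The only caveat is that your argument yields the constant $\frac{k+1}{\alpha!}B(k+1,1-N/p')$, which equals $1/\alpha!\leq 1$ at $p=1$ but can exceed $1$ as $p$ approaches $N/(N-1)$, so the stated inequality should be read with an implicit constant (as the paper does elsewhere with $\lesssim$) --- you correctly flag this yourself.
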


\begin{remark}By density, the above result holds for all $f\in L^1(\rr^N,1+|x|^k)$ with $|x|^{k+1}f\in L^p(\rr^N)$.
\end{remark}

The above result is  a consequence of Taylor's formula with integral remainder, applied to any test function $\varphi$:
\begin{equation}\label{taylor}
\varphi (z)=\sum _{|\alpha|\leq k} (D^\alpha \varphi )(0)\frac {z^\alpha} {\alpha !} +(k+1)\sum _{|\alpha|=k+1}\frac {z^\alpha}{\alpha!} \int _0^1 (1-t)^k (D^\alpha \varphi)(tz)dt, z\in \rr^N.
\end{equation}

We now derive some anisotropic variants. For, given $z\in \rr^N$, $N\geq 2$ we decompose it as  $z=(x,y)\in \rr^m\times \rr^n$ with $m+n=N$. We then use a splitting argument and apply first   Taylor's expansion in the $y$ variable with $x$ fixed and then perform  Taylor's expansion in $x$. 

\begin{lemma}\label{mytaylor}
For any $\varphi\in \mathcal{D}(\rr^N)$ the following holds
\begin{align*}
\varphi(z)= &\sum _{|\alpha|\leq k} (D^\alpha \varphi)(0)\frac {z^\alpha} {\alpha !} +
\sum _{|\gamma|\leq k} \Big[(k+1-|\gamma|) \sum _{|\beta|= k+1-\gamma} \frac {x^\beta}{\beta!} \int _0^1 (1-t)^{k+|\gamma|}
(D_x^\beta D_y^\gamma \varphi)(tx,0)dt
\Big]\frac {y^\gamma}{\gamma !}\\
&\quad +(k+1)\sum _{|\gamma|=k+1} \frac {y^\gamma} {\gamma !} \int _0^1 (1-t)^k(D_y^\gamma \varphi)(x,ty)dt, \quad z=(x,y)\in \rr^N.
\end{align*}
\end{lemma}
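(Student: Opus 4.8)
The plan is to follow exactly the splitting strategy announced before the statement: first expand in the $y$ variable with $x$ frozen, and then expand each resulting $x$-dependent coefficient in the $x$ variable, choosing in each case the order of expansion so that the total polynomial degree stays controlled by $k$. First I would fix $x$ and apply Taylor's formula \eqref{taylor} in the $n$ variables $y\in\mathbb{R}^n$, to order $k$, to the function $y\mapsto\varphi(x,y)$. This gives
\begin{equation*}
\varphi(x,y)=\sum_{|\gamma|\le k}(D_y^\gamma\varphi)(x,0)\frac{y^\gamma}{\gamma!}+(k+1)\sum_{|\gamma|=k+1}\frac{y^\gamma}{\gamma!}\int_0^1(1-t)^k(D_y^\gamma\varphi)(x,ty)\,dt.
\end{equation*}
The second sum is already precisely the last term in the statement, so it only remains to expand each coefficient $\psi_\gamma(x):=(D_y^\gamma\varphi)(x,0)$, for $|\gamma|\le k$, in the variable $x$.

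Next, for each fixed $\gamma$ with $|\gamma|\le k$, I would apply Taylor's formula \eqref{taylor} a second time, now in the $m$ variables $x\in\mathbb{R}^m$, but truncated at order $k-|\gamma|$ rather than $k$; this is the key choice, made so that every monomial $x^\beta y^\gamma$ arising in the polynomial part has total degree at most $k$. Since $(D_x^\beta\psi_\gamma)(0)=(D_x^\beta D_y^\gamma\varphi)(0,0)$, the polynomial part contributes the terms $(D_x^\beta D_y^\gamma\varphi)(0)\,x^\beta/\beta!$ over $|\beta|\le k-|\gamma|$, while the integral remainder of this order-$(k-|\gamma|)$ expansion is
\begin{equation*}
(k+1-|\gamma|)\sum_{|\beta|=k+1-|\gamma|}\frac{x^\beta}{\beta!}\int_0^1(1-t)^{k-|\gamma|}(D_x^\beta D_y^\gamma\varphi)(tx,0)\,dt.
\end{equation*}
Multiplying by $y^\gamma/\gamma!$ and summing over $|\gamma|\le k$ reproduces exactly the middle double sum of the statement.

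Finally I would reassemble the polynomial contributions coming from the two expansions. After multiplication by $y^\gamma/\gamma!$ these are of the form $(D_x^\beta D_y^\gamma\varphi)(0)\,x^\beta y^\gamma/(\beta!\,\gamma!)$ with $|\beta|+|\gamma|\le k$; writing $\alpha=(\beta,\gamma)$ and using the multi-index identities $z^\alpha=x^\beta y^\gamma$, $\alpha!=\beta!\,\gamma!$ and $D^\alpha=D_x^\beta D_y^\gamma$, they collapse into $\sum_{|\alpha|\le k}(D^\alpha\varphi)(0)\,z^\alpha/\alpha!$, which is the first term. The only genuine obstacle is the multi-index bookkeeping in matching the two expansion orders: one must truncate the $x$-expansion at an order depending on $|\gamma|$, so that the polynomial monomials all have degree $\le k$ while the remainder monomials have degree exactly $k+1$, and then check that the decomposition into these three disjoint groups of terms is exhaustive. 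Once the order parameters are lined up, the identification of each group with the corresponding piece of the statement is a direct comparison of sums over multi-indices.
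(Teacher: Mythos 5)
Your proof is correct and follows exactly the paper's own argument: Taylor's formula \eqref{taylor} in $y$ to order $k$ with $x$ frozen, then Taylor's formula in $x$ applied to each coefficient $(D_y^\gamma\varphi)(x,0)$ truncated at order $k-|\gamma|$, followed by the multi-index recombination $\alpha=(\beta,\gamma)$. One point worth flagging: your remainder exponent $(1-t)^{k-|\gamma|}$ is the correct one, as it must match the truncation order $k-|\gamma|$ and the prefactor $k+1-|\gamma|$ in the standard integral remainder (check, e.g., $k=1$, $|\gamma|=1$); the exponent $(1-t)^{k+|\gamma|}$ in the lemma's statement, repeated verbatim in the paper's proof, is a sign typo, as is the condition $|\beta|=k+1-\gamma$, which should read $|\beta|=k+1-|\gamma|$.
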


\begin{proof}Using Taylor's expansion in $y$ with $x$ fixed we get 
\begin{align*}
\varphi(z)=&\varphi(x,y)=\sum _{|\gamma|\leq k} (D^\gamma _y \varphi)(x,0)\frac {y^\gamma}{\gamma !}+
(k+1)\sum _{|\gamma|=k+1} \frac {y^\gamma} {\gamma !} \int _0^1 (1-t)^k(D_y^\gamma \varphi)(x,ty)dt.
\end{align*}

Now, applying Taylor's expansion with respect to $x$ we get:
\begin{align*}
\varphi(z)
=&\sum _{|\gamma|\leq k}  \Big[\sum _{|\beta|\leq k-|\gamma|} (D_x^\beta D_y^\gamma \varphi)(0,0)\frac {x^\beta}{\beta !}\frac {y^\gamma}{\gamma !}
\Big]\\
&+
\sum _{|\gamma|\leq k} \Big[(k+1-|\gamma|) \sum _{|\beta|= k+1-\gamma} \frac {x^\beta}{\beta!} \int _0^1 (1-t)^{k+|\gamma|}
(D_x^\beta D_y^\gamma \varphi)(tx,0)dt
\Big]\frac {y^\gamma}{\gamma !}\\
&\quad +(k+1)\sum _{|\gamma|=k+1} \frac {y^\gamma} {\gamma !} \int _0^1 (1-t)^k(D_y^\gamma \varphi)(x,ty)dt.
\end{align*}

Writing $\alpha=(\beta,\gamma)$ we obtain the desired identity.
\end{proof}

We now introduce the space $ L^p(\rr^m,|x|^{|\beta|};\ L^1(\rr^n,|y|^{|\gamma|}))$, constituted by  the functions $f$ such that $|x|^{|\beta|} 
\||y|^{|\gamma|} f(x,\cdot)\|_{L^1(\rr^n)}$ belongs to $L^p(\rr^m)$, i.e.:
\begin{equation}\label{norm.mixed}
\|f\|_{L^p(\rr^m,|x|^{|\beta|};\ L^1(\rr^n,|y|^{|\gamma|}))}^p=\int _{\rr^m}|x|^{p|\beta|}\|f(x,\cdot)\|_{L^1(\rr^n; |y|^{|\gamma|} )}^p
dx<\infty.
\end{equation}

Following the same ideas of treating differently the different variables $x$ and $y$ we   get the following result.

\begin{lemma}\label{my.descoposition}
For any $f\in \md(\rr^{m+n})$ it holds
\begin{align}\label{rep.2}
f(x,y)=& \sum _{|(\beta, \gamma)|\leq k}\frac{(-1)^{|(\beta,\gamma)|}}{\beta !\gamma !}  \Big(\int _{\rr^N} f(x,y)x^\beta y^\gamma  dxdy\Big)D_{xy}^{\beta \gamma} \delta_0\\
\nonumber &+\sum _{|\gamma|\leq k}\sum _{|\beta|= k+1-|\gamma|} \frac{(-1)^{|\gamma|}}{\gamma !}D^\beta_x [\rem f]_{\beta \gamma}(x)D_y^\gamma \delta_0(y)\\
\nonumber &+\sum _{|\gamma|=k+1}D^\gamma_y F_\gamma(x,y), \qquad  (x,y)\in \rr^m\times \rr^n,
\end{align}
where  the remainder terms are defined as follows:
$$[\rem f]_{\beta \gamma}(x)=\frac{(-1)^{|\beta|}|\beta|}{\beta !}\int _0^1 (1-t)^{|\beta|-1}(\frac xt)^{\beta} \int _{\rr^n}f(\frac xt,y)y^\gamma dy \frac {dt}{t^m}$$
 for $|\gamma|\leq k, |\beta|+|\gamma|=k+1$, 
and 
$$F_\gamma(x,y)=(k+1)\frac{(-1)^{k+1}}{\gamma !}\int _0^1 (1-t)^{k}(\frac yt)^{\gamma} f(x,\frac yt)\frac {dt}{t^n}$$
for $|\gamma|=k+1$.

Moreover, for $1\leq p< \frac n{n-1}$ it holds
\begin{equation}\label{ord.k+1}
\|F_\gamma \|_{L^p(\rr^N)}\leq \||y |^{k+1}f \|_{L^p(\rr^N)}
\end{equation}
and for $1\leq p< \frac m{m-1}$ it holds
\begin{align}\label{mab}
\|[\rem f]_{\beta\gamma} \|_{L^p(\rr^m)}\leq \|f\|_{ L^p(\rr^m,|x|^{|\beta|};\ L^1(\rr^n,|y|^{|\gamma|}))}.
\end{align}

\end{lemma}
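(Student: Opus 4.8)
The plan is to prove the decomposition \eqref{rep.2} as an identity in $\md'(\rr^N)$ by pairing both sides with an arbitrary test function $\varphi\in\md(\rr^N)$ and invoking the anisotropic Taylor expansion of Lemma \ref{mytaylor}; the three groups of terms in \eqref{rep.2} will be matched one-to-one with the three groups produced by that expansion. Once the identity is established on $\md$, the quantitative bounds \eqref{ord.k+1} and \eqref{mab} follow by recognizing $F_\gamma$ and $[\rem f]_{\beta\gamma}$ as instances of the single-group construction of Lemma \ref{DZ}, so that its $L^p$ estimate applies directly.

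First I would write $\la f,\varphi\ra=\int_{\rr^N}f(x,y)\varphi(x,y)\,dx\,dy$ and substitute the expansion of $\varphi$ given by Lemma \ref{mytaylor}. The polynomial part $\sum_{|\alpha|\le k}(D^\alpha\varphi)(0)z^\alpha/\alpha!$ integrates against $f$ to give $\sum_{|(\beta,\gamma)|\le k}\frac{1}{\beta!\gamma!}\big(\int f\,x^\beta y^\gamma\big)(D^{\beta\gamma}_{xy}\varphi)(0)$, which is exactly the pairing of the first (delta-derivative) sum in \eqref{rep.2}, since $\la D^{\beta\gamma}_{xy}\delta_0,\varphi\ra=(-1)^{|(\beta,\gamma)|}(D^{\beta\gamma}_{xy}\varphi)(0)$ and the two factors $(-1)^{|(\beta,\gamma)|}$ cancel.

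For the middle group I would substitute the mixed remainder of Lemma \ref{mytaylor}, interchange the $x$- and $t$-integrals by Fubini (licit because $f\in\md$ is compactly supported and smooth), and perform the change of variables $w=tx$, which turns $(D_x^\beta D_y^\gamma\varphi)(tx,0)$ into $(D_x^\beta D_y^\gamma\varphi)(w,0)$ and produces a factor $t^{-m}$. Collecting the resulting $t$-integral against $f$ reproduces precisely the kernel defining $[\rem f]_{\beta\gamma}(w)$ with $g_\gamma(x):=\int_{\rr^n}f(x,y)y^\gamma\,dy$, and matching signs against $\la D_x^\beta[\rem f]_{\beta\gamma}\otimes D_y^\gamma\delta_0,\varphi\ra=(-1)^{|\beta|+|\gamma|}\int[\rem f]_{\beta\gamma}(x)(D_x^\beta D_y^\gamma\varphi)(x,0)\,dx$ closes this group, where the prefactor $k+1-|\gamma|=|\beta|$ in Lemma \ref{mytaylor} aligns with the factor $|\beta|$ in $[\rem f]_{\beta\gamma}$. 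The last group is handled identically but in the $y$-variable: the change $v=ty$ converts the top-order Taylor remainder into the kernel defining $F_\gamma$, and the pairing $\la D_y^\gamma F_\gamma,\varphi\ra=(-1)^{k+1}\int F_\gamma\,(D_y^\gamma\varphi)$ matches term by term.

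The estimates are the cleaner part. For \eqref{ord.k+1} I would note that, for fixed $x$, $F_\gamma(x,\cdot)$ is exactly the remainder function of Lemma \ref{DZ} in $\rr^n$ (with order $k+1=|\gamma|$) applied to $f(x,\cdot)$, so $\|F_\gamma(x,\cdot)\|_{L^p(\rr^n)}\le\||y|^{k+1}f(x,\cdot)\|_{L^p(\rr^n)}$ for $p<n/(n-1)$; raising to the $p$-th power and integrating in $x$ yields the claim. For \eqref{mab} I would recognize $[\rem f]_{\beta\gamma}$ as the remainder function of Lemma \ref{DZ} in $\rr^m$ of order $|\beta|$ applied to $g_\gamma$, whence $\|[\rem f]_{\beta\gamma}\|_{L^p(\rr^m)}\le\||x|^{|\beta|}g_\gamma\|_{L^p(\rr^m)}$ for $p<m/(m-1)$, and then bound $|g_\gamma(x)|\le\|f(x,\cdot)\|_{L^1(\rr^n,|y|^{|\gamma|})}$ to recover the mixed norm on the right-hand side of \eqref{mab}. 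The main difficulty here is not conceptual but one of bookkeeping: keeping the signs, the factorials, and the prefactor $k+1-|\gamma|=|\beta|$ aligned through the two changes of variables so that the Taylor kernels coincide with the definitions of $[\rem f]_{\beta\gamma}$ and $F_\gamma$ exactly, and verifying that each Fubini interchange is justified.
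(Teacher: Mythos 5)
Your proposal is correct, but it enters through a different door than the paper. The paper's proof never invokes Lemma \ref{mytaylor}: it stays on the distributional side and applies Lemma \ref{DZ} twice --- first, for each fixed $x$, to $y\mapsto f(x,y)$ with remainder order $k$, which produces the $D_y^\gamma\delta_0$ terms and the functions $F_\gamma$; then to each moment function $x\mapsto [Mf]_\gamma(x)=\int_{\rr^n}f(x,y)y^\gamma\,dy$ with remainder order $k-|\gamma|$, which produces the $[\rem f]_{\beta\gamma}$ --- and substitutes the second decomposition into the first. You instead pair $f$ against a test function and expand $\varphi$ via the anisotropic Taylor formula of Lemma \ref{mytaylor}, recovering the kernels by Fubini and the dilations $w=tx$, $v=ty$. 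The two arguments are exactly dual (Lemma \ref{DZ} is itself the dual of the one-variable Taylor formula \eqref{taylor}), so no new idea separates them; what differs is the bookkeeping. The paper's route gets the identity and the precise form of the kernels for free from \eqref{rep.1}--\eqref{rep.f}, while your route must redo the change-of-variables and sign matching by hand --- which you do correctly, including the alignment of the prefactor $k+1-|\gamma|=|\beta|$ with the factor $|\beta|$ in $[\rem f]_{\beta\gamma}$. One caution your verification silently handles: Lemma \ref{mytaylor} as printed carries the misprints $(1-t)^{k+|\gamma|}$ (it should read $(1-t)^{k-|\gamma|}$) and $|\beta|=k+1-\gamma$ (it should read $|\beta|=k+1-|\gamma|$); the matching with the kernel $(1-t)^{|\beta|-1}$ in $[\rem f]_{\beta\gamma}$ only closes with the corrected exponent, so if you cite Lemma \ref{mytaylor} you should note the correction. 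Your treatment of the estimates \eqref{ord.k+1} and \eqref{mab} coincides with the paper's: the slice-wise bound $\|F_\gamma(x,\cdot)\|_{L^p(\rr^n)}\leq \||y|^{k+1}f(x,\cdot)\|_{L^p(\rr^n)}$ followed by integration in $x$, and the $\rr^m$-bound of Lemma \ref{DZ} applied to $g_\gamma=[Mf]_\gamma$ together with $|g_\gamma(x)|\leq\|f(x,\cdot)\|_{L^1(\rr^n,|y|^{|\gamma|})}$, which is precisely how the paper obtains the mixed-norm estimate.
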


\begin{remark}
The main term is the same as in Lemma \ref{DZ}, since
$$\sum _{|(\beta, \gamma)|\leq k}\frac{(-1)^{|(\beta,\gamma)|}}{\beta !\gamma !}  \Big(\int _{\rr^N} f(x,y)x^\beta y^\gamma  dxdy\Big)D_{xy}^{\beta \gamma} \delta_0=\sum _{|\alpha|\leq k}\frac{(-1)^{|\alpha|}}{\alpha !}\Big (\int _{\rr^N} f(z)z^\alpha dz\Big)D^\alpha \delta_0.
$$
But estimating the remainder terms $F_\gamma$ and $[Mf]_{\beta\gamma}$  different anisotropic spaces from those of Lemma \ref{DZ} are needed. 

By density, the decomposition holds for all $f\in L^1(\rr^N,1+|\cdot|^{k})$ that belongs to 
  $$\sp=\bigcap _{|\gamma|\leq k}   L^p(\rr^m,|x|^{k+1-|\gamma|};\ L^1(\rr^n,|y|^{|\gamma|})) \bigcap
  L^p(\rr^m;\ L^p(\rr^n,|y|^{k+1})) .$$
  We also define the natural norm on $\sp$  $$\| f \|_{\sp}:=\sum _{0\leq |\gamma|\leq k}
 \|f\|_{ L_x^p(\rr^m,|x|^{k+1-|\gamma|};\ L^1_y(\rr^n,|y|^{|\gamma|}))}+\||y|^{k+1}f\|_{L^p(\rr^N)}.$$
  When $p=1$ the space $\sp$ is exactly the one needed in Lemma \ref{DZ}, i.e. $L^1(\rr^{m+n}, |(x,y)|^{k+1})$.
\end{remark}


\begin{proof}[Proof of Lemma \ref{my.descoposition}] We write $\rr^N$ as $\rr^m\times \rr^n$.
We first fix $x\in \rr^m$ and apply Lemma \ref{DZ} to the function $y\rightarrow f(x,y)$. Then
\begin{equation}\label{dec.2.2}
f(x,y)=\sum _{|\gamma|\leq k}\frac{(-1)^{|\gamma|}}{\gamma !} \Big(\int _{\rr^n} f(x,y)y^\gamma dy\Big)D_y^\gamma \delta_0(y) +
\sum _{|\gamma|=k+1}D^\gamma_y F_\gamma(x,y),
\end{equation}
where
\begin{equation}\label{rep.f.gamma}
F_\gamma(x,y)=(k+1)\frac{(-1)^{k+1}}{\gamma !}\int _0^1 (1-t)^{k}(\frac yt)^{\gamma} f(x,\frac yt)\frac {dt}{t^n}.
\end{equation}
A similar argument as in \cite{MR1183805} shows that  for any  $x\in \rr^m$ and $p\in [1,\frac {n}{n-1})$ we have 
\begin{equation}\label{Fab}
\|F_\gamma(x,\cdot)\|_{L^p(\rr^n)}\leq \||\cdot|^{k+1}f(x,\cdot)\|_{L^p(\rr^n)}.
\end{equation}
Taking the $L^p(\rr^m)$-norm in the $x$-variable  we obtain estimate \eqref{ord.k+1}.

We denote by $[Mf]_{\gamma}(x)$ the $y$-moment of order $\gamma$ of $f(x,\cdot)$ in $\rr^n$:
\begin{equation}
\label{ymom} 
  [Mf]_{\gamma}(x)=\int _{\rr^n} f(x,y)y^\gamma dy.
\end{equation}
 We apply Lemma \ref{DZ} to the function $x\rightarrow [Mf]_{\gamma}(x)$ and obtain
$$[Mf]_{\gamma}(x)=\sum _{|\beta|\leq k-|\gamma|}\frac{(-1)^{|\beta|}}{\beta !} \Big(\int _{\rr^m} [Mf]_{\gamma}(x)x^\beta dx\Big)D_x^\beta \delta_0(x) +
\sum _{|\beta|=k+1-|\gamma|}D^\beta_x [\rem f]_{\beta \gamma}(x)
$$
where $[\rem f]_{\beta \gamma}$ is given by
$$[\rem f]_{\beta \gamma}(x)=\frac{(-1)^{|\beta|}|\beta|}{\beta !}\int _0^1 (1-t)^{|\beta|-1}(\frac xt)^{\beta} [Mf]_{\gamma}(\frac xt)\frac {dt}{t^m}.$$
Writing explicitly the function $[Mf]_{\gamma}$, we have 
\begin{equation}\label{starstar}
[Mf]_{\gamma}(x)=\sum _{|\beta|\leq k-|\gamma|}\frac{(-1)^{|\beta|}}{\beta !} \Big(\int _{\rr^N} f(x,y)x^\beta y^\gamma  dxdy\Big)D_x^\beta \delta_0(x) +
\sum _{|\beta|=k+1-|\gamma|}D^\beta_x [\rem f]_{\beta \gamma}(x)
\end{equation}
where $[\rem f]_{\beta\gamma}$ are given by
$$[\rem f]_{\beta \gamma}(x)=\frac{(-1)^{|\beta|}|\beta|}{\beta !}\int _0^1 (1-t)^{|\beta|-1}(\frac xt)^{\beta} \int _{\rr^n}f(\frac xt,y)y^\gamma dy \frac {dt}{t^m}.$$
Replacing $[Mf]_{\gamma}$ in \eqref{dec.2.2} with identity \eqref{starstar} we obtain 
\begin{align*}
f
=&\sum _{|\gamma|\leq k}\sum _{|\beta|\leq k-|\gamma|} \frac{(-1)^{|\gamma|+|\beta|}}{\gamma !\beta!} \Big(\int _{\rr^N} f(x,y)x^\beta y^\gamma  dxdy\Big)D_x^\beta \delta_0(x)D_y^\gamma \delta_0(y)\\
&+\sum _{|\gamma|\leq k}\sum _{|\beta|= k+1-|\gamma|} \frac{(-1)^{|\gamma|}}{\gamma !}D^\beta_x [\rem f]_{\beta \gamma}(x)D_y^\gamma \delta_0(y)+\sum _{|\gamma|=k+1}D^\gamma_y F_\gamma(x,y)\\
=& \sum _{|(\beta, \gamma)|\leq k}\frac{(-1)^{|(\beta,\gamma)|}}{\beta !\gamma !}  \Big(\int _{\rr^N} f(x,y)x^\beta y^\gamma  dxdy\Big)D_{xy}^{\beta \gamma} \delta_0\\
&+\sum _{|\gamma|\leq k}\sum _{|\beta|= k+1-|\gamma|} \frac{(-1)^{|\gamma|}}{\gamma !}D^\beta_x [\rem f]_{\beta \gamma}(x)D_y^\gamma \delta_0(y)+\sum _{|\gamma|=k+1}D^\gamma_y F_\gamma(x,y).
\end{align*}
Proceeding as in \cite{MR1183805} and using Minkowski's inequality, for any $1\leq p<\frac m{m-1}$ we get estimate
 \eqref{mab} on the remainder $[\rem f]_{\beta\gamma}$ 
 \begin{align*}
\|[\rem f]_{\beta\gamma} \|_{L^p(\rr^m)}&\leq \Big\||x|^\beta \int _{\rr^n} f(x,y) y^\gamma dy|\Big\|_{L^p(\rr^m)}
\leq \|f\|_{ L^p(\rr^m,|x|^{|\beta|};\ L^1(\rr^n,|y|^{|\gamma|}))}.
\end{align*}
The proof is now finished.
\end{proof}

\medskip
We now apply the new decomposition obtained in Lemma \ref{my.descoposition} 
 to the heat equation \eqref{heat}.

\begin{theorem}\label{new.app.to.heat}Assume   that   $N=m+n$.
For any $1\leq p<\min \{\frac m{m-1},\frac n{n-1}\}$ and $k\geq 0$ integer, the solution of problem \eqref{heat} satisfies:
\begin{align*}
\Big\| u(\cdot,t)-\sum _{|\alpha|\leq k}\frac {(-1)^{|\alpha|}}{\alpha !} \Big(\int _{\rr^N} f(z)z^\alpha dz\Big) D^\alpha G(\cdot, t)\Big\| _{L^p(\rr^N)}\lesssim t^{-\frac {k+1}2}\| f\|_{\sp}
\end{align*}
for any initial data $f\in L^1(\rr^N,1+|z|^k)\cap\sp$. 
\end{theorem}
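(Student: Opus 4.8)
The plan is to combine the representation $u(\cdot,t)=G(\cdot,t)\ast f$ with the decomposition of the initial datum furnished by Lemma \ref{my.descoposition}, taking advantage of the fact that the Gaussian kernel factorizes as $G(x,y,t)=G_m(x,t)\,G_n(y,t)$, where $G_m$ and $G_n$ denote the heat kernels on $\rr^m$ and $\rr^n$ respectively. First I would insert the identity \eqref{rep.2} into the convolution. Since $G(\cdot,t)\ast D^\alpha\delta_0=D^\alpha G(\cdot,t)$ and, by the remark following Lemma \ref{my.descoposition}, the leading $(\beta,\gamma)$-indexed sum coincides with $\sum_{|\alpha|\le k}\frac{(-1)^{|\alpha|}}{\alpha!}(\int f z^\alpha)D^\alpha\delta_0$, convolving the main term reproduces exactly the sum subtracted in the statement. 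Hence it remains to bound the $L^p(\rr^N)$ norm of $G(\cdot,t)$ convolved with the two families of remainder terms of \eqref{rep.2}.

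For the terms carrying the factor $D_x^\beta[\rem f]_{\beta\gamma}(x)\,D_y^\gamma\delta_0(y)$, with $|\beta|+|\gamma|=k+1$ and $|\gamma|\le k$, the factorization of $G$ turns the convolution into a tensor product: the $y$-Dirac mass forces the full $y$-derivative onto $G_n$ with no function left to convolve in $y$, while in $x$ one convolves $D_x^\beta G_m$ with $[\rem f]_{\beta\gamma}$. Thus, using Young's inequality in the $x$-variable,
\begin{align*}
\big\|G(\cdot,t)\ast\big(D_x^\beta[\rem f]_{\beta\gamma}\otimes D_y^\gamma\delta_0\big)\big\|_{L^p(\rr^N)}
&=\big\|(D_x^\beta G_m(\cdot,t))\ast[\rem f]_{\beta\gamma}\big\|_{L^p(\rr^m)}\,\big\|D_y^\gamma G_n(\cdot,t)\big\|_{L^p(\rr^n)}\\
&\le \|D_x^\beta G_m(\cdot,t)\|_{L^1(\rr^m)}\,\|[\rem f]_{\beta\gamma}\|_{L^p(\rr^m)}\,\|D_y^\gamma G_n(\cdot,t)\|_{L^p(\rr^n)}.
\end{align*}
The self-similar form of the kernels gives $\|D_x^\beta G_m(\cdot,t)\|_{L^1}\lesssim t^{-|\beta|/2}$ and $\|D_y^\gamma G_n(\cdot,t)\|_{L^p}\lesssim t^{-|\gamma|/2-\frac n2(1-1/p)}$, while estimate \eqref{mab} bounds $\|[\rem f]_{\beta\gamma}\|_{L^p(\rr^m)}$ by $\|f\|_{\sp}$ as soon as $p<\frac m{m-1}$. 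Since $|\beta|+|\gamma|=k+1$, this contribution is $\lesssim t^{-(k+1)/2}\|f\|_{\sp}$, the extra factor $t^{-\frac n2(1-1/p)}\le 1$ (for $t\ge1$) only improving the decay.

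For the terms $D_y^\gamma F_\gamma$ with $|\gamma|=k+1$ I would instead keep the full kernel in $L^1$: moving the derivative onto $G$ and applying Young's inequality on $\rr^N$,
\begin{align*}
\|G(\cdot,t)\ast D_y^\gamma F_\gamma\|_{L^p(\rr^N)}=\|(D_y^\gamma G(\cdot,t))\ast F_\gamma\|_{L^p(\rr^N)}\le \|D_y^\gamma G(\cdot,t)\|_{L^1(\rr^N)}\,\|F_\gamma\|_{L^p(\rr^N)}.
\end{align*}
The factorization yields $\|D_y^\gamma G(\cdot,t)\|_{L^1(\rr^N)}=\|G_m(\cdot,t)\|_{L^1(\rr^m)}\|D_y^\gamma G_n(\cdot,t)\|_{L^1(\rr^n)}\lesssim t^{-(k+1)/2}$, and estimate \eqref{ord.k+1} controls $\|F_\gamma\|_{L^p}$ by $\||y|^{k+1}f\|_{L^p}\le\|f\|_{\sp}$ as soon as $p<\frac n{n-1}$. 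Summing the finitely many multi-indices in both families, and noting that the two constraints $p<\frac m{m-1}$ and $p<\frac n{n-1}$ together give $p<\min\{\frac m{m-1},\frac n{n-1}\}$, yields the asserted bound. The essential point — and the only real subtlety — is the Gaussian factorization, which decouples the estimate into an $x$-piece and a $y$-piece and makes transparent why the Dirac mass in the $y$-variable of the $[\rem f]_{\beta\gamma}$ terms must be paired with an $L^p$ (rather than $L^1$) bound on $D_y^\gamma G_n$; everything else is a routine application of Young's inequality together with the self-similar scaling of the heat kernel.
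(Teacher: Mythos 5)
Your proof is correct and follows essentially the same route as the paper: apply Lemma \ref{my.descoposition}, estimate the mixed remainder terms by Young's inequality in the $x$-variable together with the kernel's self-similar scaling (the paper phrases your explicit factorization $G=G_m\otimes G_n$ equivalently via the mixed norm $\|D_x^\beta D_y^\gamma G(t)\|_{L^p_yL^1_x}$), and treat the $F_\gamma$ terms by Young's inequality on all of $\rr^N$ with \eqref{ord.k+1}. Your remark that the extra factor $t^{-\frac n2(1-\frac 1p)}$ only improves the decay for $t\ge 1$ matches the paper's (implicit) large-time reading of the estimate.
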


\begin{remark}
In the case $p=1$ the above estimate is the same as the one obtained in \cite{MR1183805} by applying Lemma \ref{DZ}. 

The range  of exponents  $p$ for which the above Theorem applies  is larger than that in \cite{MR1183805} but the space $\sp$ where the initial data is taken is more complex and of anisotropic nature.
\end{remark}
\begin{proof}
We now apply Lemma \ref{my.descoposition}  to the initial data $f$. Then
\begin{align*}
\Big\|G(t)\ast & f-  \sum _{|\alpha|\leq k}\frac{(-1)^{|\alpha|}}{\alpha !} (\int _{\rr^N} f(x)x^\alpha dx)D^\alpha G(t) \Big\|_{L^p(\rr^N)}\\
&\leq \sum _{0\leq |\gamma|\leq k}
\sum _{|\beta|= k+1-|\gamma|} \Big\| G(t)\ast  \Big( D^\beta_x [\rem f]_{\beta \gamma}(x)D_y^\gamma \delta_0(y)\Big)\Big\|_{L^p(\rr^N)}+\sum _{|\gamma|=k+1}\|G(t)\ast D^\gamma_y F_\gamma(x,y)\|_{L^p(\rr^N)}\\
&\leq \sum _{0\leq |\gamma|\leq k}
\sum _{|\beta|= k+1-|\gamma|} \| D^\beta_x D^\gamma _yG(t,\cdot,y)\ast_x   [ \rem f]_{\beta \gamma}(\cdot)\|_{L^p(\rr^N)}+\sum _{|\gamma|=k+1}\|D^\gamma_yG(t)\ast  F_\gamma(x,y)\|_{L^p(\rr^N)}\\
&\leq  \sum _{0\leq |\gamma|\leq k}
\sum _{|\beta|= k+1-|\gamma|}\Big\| \|D_x^\beta D_y^\gamma G(t) \|_{L^1_x} \|[\rem f]_{\beta \gamma}\|_{L^p_x}\Big \|_{L^p_y}+|t|^{-\frac{k+1}2}\|F_\gamma\|_{L^p(\rr^N)}\\
&\leq  \sum _{0\leq |\gamma|\leq k}
\sum _{|\beta|= k+1-|\gamma|} \|[ \rem f]_{\beta \gamma}\|_{L^p_x}\|D_x^\beta D_y^\gamma G(t)\|_{L^p_yL^1_x} +|t|^{-\frac{k+1}2}\||y|^{k+1}f\|_{L^p(\rr^N)}\\
&\leq  \sum _{0\leq |\gamma|\leq k}
\sum _{|\beta|= k+1-|\gamma|}|t|^{-\frac {|\beta|+|\gamma|}2-\frac n2(1-\frac 1p) }\|f\|_{ L^p(\rr^m,|x|^{|\beta|};\ L^1(\rr^n,|y|^{|\gamma|}))}+|t|^{-\frac{k+1}2}\||y|^{k+1}f\|_{L^p(\rr^N)}\\
&\leq t^{-\frac{k+1}2}\|f\|_{\sp}.
\end{align*}
The proof of Theorem \ref{new.app.to.heat} is now finished.
\end{proof}

\section{An anisotropic heat equation of mixed order}\label{aniso}


This section is devoted to the analysis of the following  system
\begin{equation}\label{heat.an}
\left\{
\begin{array}{ll}
\displaystyle u_t(z,t)=\Delta _x^2 u(z,t)+\Delta _y u(z,t),& z=(x,y)\in \rr^m\times \rr^n, t>0,\\[10pt]
u(z,0)=f(z),&z\in \rr^{m+n}.
\end{array}
\right.
\end{equation}
The solution $u$ of the above system is given by $u(t)=G_t\ast f$ where $G_t$ is the fundamental solution given by
$$G_t(x,y)=t^{-\frac m4-\frac n2}G_1 (\frac x{t^{1/4}},\frac y{t^{1/2}}),$$
and the self-similar profile  $G_1$ is so that its Fourier transform is such that
$$\hat {G_1}(\xi,\eta)=e^{-|\xi|^4-|\eta|^2}.$$

We now apply Lemma \ref{DZ} to the initial data $f$. For any  $1\leq p<N/(N-1)$, $N=m+n$, we obtain that
$u$, solution of \eqref{heat.an}, satisfies
\begin{align*}
\Big\| u(\cdot,t)-\sum _{|\alpha|\leq k}\frac {(-1)^{|\alpha|}}{\alpha !} \Big(\int _{\rr^N} f(z)z^\alpha dz\Big) D^\alpha G_t(\cdot, t)\Big\| _{L^p(\rr^N)}\lesssim ||z|^{k+1} f\|_{L^p(\rr^N)}\sum _{|\alpha|=k+1} \|D^\alpha G_t\|_{L^1(\rr^N)},
\end{align*}
or, writing each index $\alpha$ of length $N=n+m$ as $\alpha=(\beta,\gamma)\in \zz^m\times \zz^n$,
\begin{align*}
\Big\| u(\cdot,t)- \sum _{|(\beta, \gamma)|\leq k}\frac{(-1)^{|(\beta,\gamma)|}}{\beta !\gamma !}  \Big(\int _{\rr^N} f(x,y)x^\beta y^\gamma  dxdy\Big)D_{xy}^{\beta \gamma}G_t\Big\| _{L^p(\rr^N)}\\
\\
\lesssim ||z|^{k+1} f\|_{L^p(\rr^N)}\sum _{|\beta|+|\gamma|=k+1} \|D^\beta_x D^\gamma_y G_t\|_{L^1(\rr^N)}.
\end{align*}
Remark that, due to the anisotropy of the fundamental solution, the derivatives in the $x$-variable decay differently
 from those in the $y$-variable:
\begin{equation}\label{decay.xy}
\|D^\beta_x D^\gamma_y G_t \|_{L^q(\rr^N)}\simeq t^{-\frac{|\beta|} 4-\frac {|\gamma|}2-\frac N2(1-\frac 1q)}, \quad q\in [1,\infty].
\end{equation}
As a consequence, for large $t$ we have
\begin{align*}
\Big\| u(\cdot,t)- &\sum _{|(\beta, \gamma)|\leq k}\frac{(-1)^{|(\beta,\gamma)|}}{\beta !\gamma !}  \Big(\int _{\rr^N} f(x,y)x^\beta y^\gamma  dxdy\Big)D_{xy}^{\beta \gamma}G_t\Big\| _{L^p(\rr^N)}\\
&
\lesssim ||z|^{k+1} f\|_{L^p(\rr^N)}\sum _{|\beta|+|\gamma|=k+1}  t^{-\frac{|\beta|} 4-\frac {|\gamma|}2}\lesssim
t^{-\frac{k+1}4} \||z|^{k+1} f\|_{L^p(\rr^N)}.
\end{align*}
Note however  that in the left hand side of the above inequality there are terms that decay faster than the remainder $t^{-\frac{k+1}4} $. Accordingly it is natural to  split the terms in the left hand side in two parts: those that decay faster and, respectively,  slower than
  $t^{-\frac{k+1}4} $.  
  
  Define
  $$\Lambda(p,k)=\{(a,b)\in \zz^2: a,b\geq 0,  k+1-2N(1-1/p)\leq a+2b,\, a+b\leq k \}.$$
It follows that, as long as $f$ belongs to some weighted spaces, the solution $u$ of system \eqref{heat.an} satisfies for large time $t$ the following estimate
\begin{align}\label{est.1}
\Big\| u(\cdot,t)- &\sum _{|\beta|+2|\gamma|< k+1-2N(1-1/p)}\frac{(-1)^{|(\beta,\gamma)|}}{\beta !\gamma !}  \Big(\int _{\rr^N} f(x,y)x^\beta y^\gamma  dxdy\Big)D_{xy}^{\beta \gamma}G_t\Big\| _{L^p(\rr^N)}\\
\nonumber&
\lesssim 
t^{-\frac{k+1}4} \||z|^{k+1} f\|_{L^p(\rr^N)}
+\sum _{ (\beta,\gamma)\in \Lambda (p,k) } t^{-\frac{|\beta|}4-\frac{|\gamma|}2-\frac N2(1-\frac 1p)}
\|f\|_{L^1(\rr^N,|x|^{|\beta|}|y|^{|\gamma|})}\\
\nonumber &\lesssim t^{-\frac{k+1}4} \Big( \||z|^{k+1} f\|_{L^p(\rr^N)}+\sum _{ (\beta,\gamma)\in \Lambda (p,k) }
\|f\|_{L^1(\rr^N,|x|^{|\beta|}|y|^{|\gamma|})}\Big).
\end{align}
We now state rigorously the result to which this analysis leads. For the sake of simplicity we  state it for $p=1$ only.

\begin{theorem}
\label{culemaDZ}
For any $f\in L^1(\rr^{n+m},1+|x|^{k+1}+|y|^{k+1})$ the following holds for  large time $t$:
\begin{align*}
\Big\| u(\cdot,t)- &\sum _{|\beta|+2|\gamma|\leq  k}\frac{(-1)^{|(\beta,\gamma)|}}{\beta !\gamma !}  \Big(\int _{\rr^N} f(x,y)x^\beta y^\gamma  dxdy\Big)D_{xy}^{\beta \gamma}G_t\Big\| _{L^1(\rr^N)}\\
&\lesssim t^{-\frac{k+1}4} \|(1+|x|^{k+1}+|y|^{k+1}) f\|_{L^1(\rr^N)}
\end{align*}
\end{theorem}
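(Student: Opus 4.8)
The plan is to feed the \emph{isotropic} decomposition of Lemma \ref{DZ} into the convolution semigroup and then exploit the anisotropic decay rates \eqref{decay.xy}, using them twice: once to control the top-order remainder and once to reclassify the superfluous terms of the isotropic expansion. Throughout I would restrict to $t\geq 1$, which is what ``large time'' means here. First I apply Lemma \ref{DZ} to $f$ with the given integer $k$; the hypothesis $f\in L^1(\rr^{m+n},1+|x|^{k+1}+|y|^{k+1})$ is admissible since $1+|z|^k\lesssim 1+|x|^{k+1}+|y|^{k+1}$ and $|z|^{k+1}=(|x|^2+|y|^2)^{(k+1)/2}\lesssim |x|^{k+1}+|y|^{k+1}$, so both $f\in L^1(\rr^N,1+|z|^k)$ and $|z|^{k+1}f\in L^1(\rr^N)$. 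Convolving the resulting identity with $G_t$, using $G_t\ast D^\alpha\delta_0=D^\alpha G_t$ and $G_t\ast D^\alpha F_\alpha=D^\alpha G_t\ast F_\alpha$, and writing each $\alpha$ of length $N=m+n$ as $\alpha=(\beta,\gamma)\in\zz^m\times\zz^n$, gives
\begin{equation*}
u(\cdot,t)=\sum_{|(\beta,\gamma)|\leq k}\frac{(-1)^{|(\beta,\gamma)|}}{\beta!\gamma!}\Big(\int_{\rr^N}f\,x^\beta y^\gamma\,dxdy\Big)D_{xy}^{\beta\gamma}G_t+\sum_{|\beta|+|\gamma|=k+1}D_x^\beta D_y^\gamma G_t\ast F_{\beta\gamma},
\end{equation*}
where $\|F_{\beta\gamma}\|_{L^1(\rr^N)}\leq\||z|^{k+1}f\|_{L^1(\rr^N)}$ by Lemma \ref{DZ}.

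Next I would bound the top-order remainder. By Young's inequality and \eqref{decay.xy} with $q=1$ (so the spatial factor $t^{-\frac N2(1-1/q)}$ disappears), each summand obeys $\|D_x^\beta D_y^\gamma G_t\ast F_{\beta\gamma}\|_{L^1}\leq\|D_x^\beta D_y^\gamma G_t\|_{L^1}\|F_{\beta\gamma}\|_{L^1}\simeq t^{-|\beta|/4-|\gamma|/2}\|F_{\beta\gamma}\|_{L^1}$. Since $|\beta|+|\gamma|=k+1$ we have $|\beta|/4+|\gamma|/2=(|\beta|+2|\gamma|)/4\geq(k+1)/4$, so for $t\geq 1$ the entire top-order sum is $\lesssim t^{-(k+1)/4}\||z|^{k+1}f\|_{L^1}\lesssim t^{-(k+1)/4}\|(1+|x|^{k+1}+|y|^{k+1})f\|_{L^1}$, which is the claimed right-hand side.

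Finally comes the bookkeeping step, which is the crux. The main sum produced above runs over $|\beta|+|\gamma|\leq k$, whereas the statement keeps only the terms with $|\beta|+2|\gamma|\leq k$; the discrepancy is the finitely many terms with $|\beta|+|\gamma|\leq k$ but $|\beta|+2|\gamma|\geq k+1$, which I move to the remainder side. For such a term, \eqref{decay.xy} with $q=1$ gives $\|D_x^\beta D_y^\gamma G_t\|_{L^1}\simeq t^{-(|\beta|+2|\gamma|)/4}\lesssim t^{-(k+1)/4}$ for $t\geq 1$, while its coefficient satisfies $|\int f\,x^\beta y^\gamma|\leq\int|f|\,|x|^{|\beta|}|y|^{|\gamma|}\lesssim\|(1+|x|^k+|y|^k)f\|_{L^1}\lesssim\|(1+|x|^{k+1}+|y|^{k+1})f\|_{L^1}$, using $|x|^{|\beta|}|y|^{|\gamma|}\lesssim|x|^{|\beta|+|\gamma|}+|y|^{|\beta|+|\gamma|}$ and $|\beta|+|\gamma|\leq k$. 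Summing these finitely many contributions and adding the top-order estimate yields the theorem.

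The main obstacle is exactly this reclassification: one must recognize that, because an $x$-derivative of $G_t$ costs only $t^{-1/4}$ while a $y$-derivative costs $t^{-1/2}$, the effective weight of $D_x^\beta D_y^\gamma G_t$ is $(|\beta|+2|\gamma|)/4$ rather than $(|\beta|+|\gamma|)/2$, so every term with $|\beta|+2|\gamma|\geq k+1$ already decays at least as fast as the remainder and can be discarded. The only genuine analytic input is the anisotropic rate \eqref{decay.xy}, which follows from the self-similar form $G_t(x,y)=t^{-m/4-n/2}G_1(x/t^{1/4},y/t^{1/2})$ by scaling together with the Schwartz regularity of $G_1$; for $p=1$ it enters harmlessly because no spatial-decay factor appears.
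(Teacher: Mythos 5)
Your proposal is correct and follows essentially the same route as the paper: both apply the isotropic decomposition of Lemma \ref{DZ}, bound the order-$(k+1)$ remainder via Young's inequality together with the anisotropic decay \eqref{decay.xy} at $q=1$, and then reclassify the finitely many main-sum terms with $|\beta|+|\gamma|\leq k$ but $|\beta|+2|\gamma|\geq k+1$ (the paper's set $\Lambda(1,k)$) as remainder terms, paying only moments of order at most $k$, which are controlled by $\|(1+|x|^{k+1}+|y|^{k+1})f\|_{L^1}$. The only differences are cosmetic: you make the admissibility of $f$ for Lemma \ref{DZ} and the restriction $t\geq 1$ explicit, whereas the paper phrases the same bookkeeping through the general estimate \eqref{est.1}.
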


\begin{proof}
We apply estimate \eqref{est.1} to obtain the following:
\begin{align*}
\Big\| u(\cdot,t)- &\sum _{|\beta|+2|\gamma|\leq k}\frac{(-1)^{|(\beta,\gamma)|}}{\beta !\gamma !}  \Big(\int _{\rr^N} f(x,y)x^\beta y^\gamma  dxdy\Big)D_{xy}^{\beta \gamma}G_t\Big\| _{L^1(\rr^N)}\\
&\lesssim t^{-\frac{k+1}4} \Big( \||z|^{k+1} f\|_{L^1(\rr^N)}+\sum _{ (\beta,\gamma)\in \Lambda (p,k) }
\|f\|_{L^1(\rr^N,|x|^{|\beta|}|y|^{|\gamma|})}\Big)\\
&\lesssim t^{-\frac{k+1}4} \Big(\|(|x|^{k+1}+|y|^{k+1}) f\|_{L^1(\rr^N)}+\|(|x|^{k}+|y|^{k}) f\|_{L^1(\rr^N)}
\Big).
\end{align*}
Since $f$ belongs to $L^1(\rr^N)$ we obtain the desired result.
\end{proof}

\medskip

Observe that, in view of \eqref{decay.xy}, we need twice more derivatives in $x$ than in $y$ to obtain the same decay of the kernel $G_t$. We now give a new decomposition that, when applied to the anisotropic equation \eqref{heat.an}, distinguishes the two variables $x$ and $y$.  

We consider the case when $k$ is an odd integer, $k+1=2m+2$. Since we have seen that 
$\|D^\alpha_x G_t\|_{L^p(\rr^N)}\simeq \|D^\beta _y G_t\|_{L^p(\rr^N)} , |\alpha|=2|\beta|=2m,$   we will take into account that, for the decay rate of the kernel, one derivative in $y$ has the same effect as two derivatives in $x$.

\begin{lemma}\label{paso2}
Let $k$  be a  positive odd integer. For any $f\in \md(\rr^{m+n})$ it holds
\begin{align}\label{rep.3}
f(x,y)=& \sum _{|\beta|+2 |\gamma|\leq k}\frac{(-1)^{|(\beta,\gamma)|}}{\beta !\gamma !}  \Big(\int _{\rr^N} f(x,y)x^\beta y^\gamma  dxdy\Big)D_{xy}^{\beta \gamma} \delta_0\\
\nonumber &+\sum _{|\gamma|\leq (k-1)/2}\sum _{|\beta|+2|\gamma|=k+1} \frac{(-1)^{|\gamma|}}{\gamma !}D^\beta_x [\rem f]_{\beta \gamma}(x)D_y^\gamma \delta_0(y)+\sum _{|\gamma|=(k+1)/2}D^\gamma_y F_\gamma(x,y)
\end{align}
where  the remainder terms are defined as follows: 
$$F_\gamma(x,y)=\frac {k+1}2\frac{(-1)^{\frac{k+1}2}}{\gamma !}\int _0^1 (1-t)^{\frac{k-1}2}(\frac yt)^{\gamma} f(x,\frac yt)\frac {dt}{t^n},
$$
for $|\gamma|=\frac{k+1}2$, 
and   
$$[\rem f]_{\beta \gamma}(x)=\frac{(-1)^{|\beta|}|\beta|}{\beta !}\int _0^1 (1-t)^{|\beta|-1}(\frac xt)^{\beta} \int _{\rr^n}f(\frac xt,y)y^\gamma dy \frac {dt}{t^m}$$
defined for $\ |\gamma|\leq (k-1)/2, |\beta|+2|\gamma|=k+1$.

Moreover, for $1\leq p< \frac n{n-1}$, it holds
\begin{equation}\label{paso2.ord.k+1}
\|F_\gamma \|_{L^p(\rr^N)}\leq \||y |^{\frac{k+1}2}f(x,\cdot )\|_{L^p(\rr^N)},
\end{equation}
and, for $1\leq p< \frac m{m-1}$, 
\begin{align}\label{paso2.mab}
\|[\rem f]_{\beta\gamma} \|_{L^p(\rr^m)}\leq \|f\|_{ L^p(\rr^m,|x|^{|\beta|};\ L^1(\rr^n,|y|^{|\gamma|}))}.
\end{align}
\end{lemma}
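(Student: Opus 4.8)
The plan is to mirror the two–step argument used for Lemma \ref{my.descoposition}, but to choose the orders of the two Taylor expansions so that, for bookkeeping purposes, one derivative in $y$ is counted as two derivatives in $x$; this is exactly the role of the hypothesis that $k$ be odd, since it makes the relevant half-integer orders into genuine nonnegative integers. First I would fix $x\in\rr^m$ and apply Lemma \ref{DZ} to the function $y\mapsto f(x,y)$ on $\rr^n$, but now only up to order $(k-1)/2$ instead of order $k$. This produces a main part indexed by $|\gamma|\le (k-1)/2$ together with a remainder indexed by $|\gamma|=(k+1)/2$; specializing the remainder formula \eqref{rep.f} to $N=n$ and order $(k-1)/2$ gives precisely the stated $F_\gamma$. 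The per-$x$ bound of Lemma \ref{DZ}, namely $\|F_\gamma(x,\cdot)\|_{L^p(\rr^n)}\le \||\cdot|^{(k+1)/2}f(x,\cdot)\|_{L^p(\rr^n)}$ for $1\le p<n/(n-1)$, then yields \eqref{paso2.ord.k+1} after taking the $L^p$ norm in the $x$ variable.

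Next, for each fixed $\gamma$ with $|\gamma|\le (k-1)/2$ I would apply Lemma \ref{DZ} in the $x$ variable to the moment function $[Mf]_\gamma(x)=\int_{\rr^n}f(x,y)y^\gamma\,dy$, this time up to order $k-2|\gamma|$, which is nonnegative because $2|\gamma|\le k-1$. Its main part is indexed by $|\beta|\le k-2|\gamma|$ and its remainder by $|\beta|=k+1-2|\gamma|$; writing the order as $|\beta|-1$ shows that the remainder produced by \eqref{rep.f} is exactly the stated $[\rem f]_{\beta\gamma}$, since $(1-t)^{k-2|\gamma|}=(1-t)^{|\beta|-1}$ and $(k-2|\gamma|+1)(-1)^{k-2|\gamma|+1}=|\beta|(-1)^{|\beta|}$. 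Substituting this expansion of $[Mf]_\gamma$ back into the first decomposition, and using $\int_{\rr^m}[Mf]_\gamma(x)x^\beta\,dx=\int_{\rr^N}f(x,y)x^\beta y^\gamma\,dx\,dy$ together with $(-1)^{|\beta|}(-1)^{|\gamma|}=(-1)^{|(\beta,\gamma)|}$, collects the Dirac terms into the leading sum of \eqref{rep.3}. The only index accounting to verify is that the two constraints $|\gamma|\le (k-1)/2$ and $|\beta|+2|\gamma|\le k$ collapse to the single condition $|\beta|+2|\gamma|\le k$ appearing in the statement, which holds because $|\beta|+2|\gamma|\le k$ already forces $2|\gamma|\le k$, hence $|\gamma|\le (k-1)/2$ when $k$ is odd.

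Finally, estimate \eqref{paso2.mab} is obtained exactly as in Lemma \ref{my.descoposition} and in \cite{MR1183805}: the remainder $[\rem f]_{\beta\gamma}$ has the very same integral form as there, so Minkowski's inequality gives $\|[\rem f]_{\beta\gamma}\|_{L^p(\rr^m)}\le \big\||x|^{|\beta|}\int_{\rr^n}f(x,y)y^\gamma\,dy\big\|_{L^p(\rr^m)}$ for $1\le p<m/(m-1)$, and the right-hand side is by definition the mixed norm $\|f\|_{L^p(\rr^m,|x|^{|\beta|};\,L^1(\rr^n,|y|^{|\gamma|}))}$. I do not expect a genuine analytic obstacle here: the whole content of the lemma is the correct choice of expansion orders and the index bookkeeping that implements the heuristic that two $x$-derivatives play the role of one $y$-derivative, while every quantitative estimate is inherited verbatim from Lemma \ref{DZ}. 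The single point requiring care is the parity of $k$, which is precisely what guarantees that both $(k\pm 1)/2$ and each $k-2|\gamma|$ are nonnegative integers, so that Lemma \ref{DZ} is applicable at every stage.
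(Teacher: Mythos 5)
Your proposal is correct and follows essentially the same route as the paper's own proof: a Taylor/Dirac decomposition in $y$ at order $(k-1)/2$ for fixed $x$, followed by a decomposition of each moment $[Mf]_\gamma$ in $x$ at order $k-2|\gamma|$, with the estimates \eqref{paso2.ord.k+1} and \eqref{paso2.mab} inherited from Lemma \ref{DZ} and Minkowski's inequality exactly as the authors do. Your explicit verification of the constants (e.g.\ $(k-2|\gamma|+1)(-1)^{k-2|\gamma|+1}=|\beta|(-1)^{|\beta|}$) and of the index identity $\{|\gamma|\le (k-1)/2,\ |\beta|\le k-2|\gamma|\}=\{|\beta|+2|\gamma|\le k\}$ for odd $k$ is a point the paper leaves implicit, and it is carried out correctly.
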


\begin{remark}
By density the results hold true for a larger class of functions $f$.
\end{remark}

\begin{proof}[Proof of Lemma \ref{paso2}] Set $k+1=2l+2$.
Fix $x\in \rr^m$ and apply Lemma \ref{DZ} to the function $y\rightarrow f(x,y)$. Then
\begin{equation}\label{ff}
f(x,y)=\sum _{|\gamma|\leq l}\frac{(-1)^{|\gamma|}}{\gamma !}\Big (\int _{\rr^n} f(x,y)y^\gamma dy\Big)D_y^\gamma \delta_0(y) +
\sum _{|\gamma|=l+1}D^\gamma_y F_\gamma(x,y),
\end{equation}
where
\begin{equation}\label{2.rep.f.gamma}
F_\gamma(x,y)=(l+1)\frac{(-1)^{l+1}}{\gamma !}\int _0^1 (1-t)^{l}(\frac yt)^{\gamma} f(x,\frac yt)\frac {dt}{t^n}, \ |\gamma|=l+1.
\end{equation}
Observe that for any  $x\in \rr^m$ and $p\in [1,\frac {n}{n-1})$ we have 
\begin{equation}\label{2.Fab}
\|F_\gamma(x,\cdot)\|_{L^p(\rr^n)}\leq \||y|^{l+1}f(x,\cdot)\|_{L^p(\rr^n)}.
\end{equation}
Taking the $L^p(\rr^m)$-norm in the $x$ variable we obtain estimate \eqref{paso2.ord.k+1}.

Recall the definition of $[Mf]_{\gamma}$ given in \eqref{ymom}. We apply Lemma \ref{DZ} to the function $x\rightarrow [Mf]_{\gamma}(x)$, with a remainder of order $k+1-|\gamma|$  and obtain that
\begin{equation}\label{mm}
[Mf]_{\gamma}(x)=\sum _{|\beta|\leq k-2|\gamma|}\frac{(-1)^{|\beta|}}{\beta !} \Big(\int _{\rr^m} [Mf]_{\gamma}(x)x^\beta dx\Big)D_x^\beta \delta_0(x) +
\sum _{|\beta|=k+1-2|\gamma|}D^\beta_x [\rem f]_{\beta \gamma}(x),
\end{equation}
where
$$[\rem f]_{\beta \gamma}(x)=\frac{(-1)^{|\beta|}|\beta|}{\beta !}\int _0^1 (1-t)^{|\beta|-1}(\frac xt)^{\beta} [Mf]_{\gamma}(\frac xt)\frac {dt}{t^m}.$$

Using decomposition \eqref{mm} in \eqref{ff} we obtain that $f$ can be written as  
\begin{align*}
f(x,y)
=&\sum _{|\gamma|\leq m}\frac{(-1)^{|\gamma|}}{\gamma !} \Big(\sum _{|\beta|\leq k-2|\gamma|}\frac{(-1)^{|\beta|}}{\beta !} (\int _{\rr^N} f(x,y)x^\beta y^\gamma  dxdy)D_x^\beta \delta_0(x) +\\
&
\quad \quad \quad \quad \quad \quad \quad \quad \quad+\sum _{|\beta|=k+1-2|\gamma|}D^\beta_x [\rem f]_{\beta \gamma}(x)\Big)D_y^\gamma \delta_0(y) \\
&+
\sum _{|\gamma|=m+1}D^\gamma_y F_\gamma(x,y)\\
=&\sum _{|\gamma|\leq m}\sum _{|\beta|\leq k-2|\gamma|} \frac{(-1)^{|\gamma|}}{\gamma !} \frac{(-1)^{|\beta|}}{\beta !} \Big(\int _{\rr^N} f(x,y) x^\beta y^\gamma  dxdy\Big)D_x^\beta \delta_0(x)D_y^\gamma \delta_0(y)\\
&+\sum _{|\gamma|\leq m}\sum _{|\beta|= k+1-2|\gamma|} \frac{(-1)^{|\gamma|}}{\gamma !}D^\beta_x [\rem f]_{\beta \gamma}(x)D_y^\gamma \delta_0(y)+\sum _{|\gamma|=m+1}D^\gamma_y F_\gamma(x,y)\\
=& \sum _{|\beta|+2|\gamma|\leq 2m+1}\frac{(-1)^{|(\beta,\gamma)|}}{\beta !\gamma !}  \Big(\int _{\rr^N} f(x,y)x^\beta y^\gamma  dxdy\Big)D_{xy}^{\beta \gamma} \delta_0\\
&+\sum _{|\gamma|\leq m}\sum _{|\beta|= 2m+2-|\gamma|} \frac{(-1)^{|\gamma|}}{\gamma !}D^\beta_x [\rem f]_{\beta \gamma}(x)D_y^\gamma \delta_0(y)+\sum _{|\gamma|=m+1}D^\gamma_y F_\gamma(x,y).
\end{align*}
Estimate \eqref{paso2.mab} on the remainder $[\rem f]_{\beta\gamma}$ is obtained as follows: for any $1\leq p< \frac m{m-1}$ we have
\begin{align*}
\|[\rem f]_{\beta\gamma} \|_{L^p(\rr^n)}&\leq \Big\||x|^\beta \int _{\rr^n} f(x,y) y^\gamma dy\Big\|_{L^p(\rr^n)}
\leq \|f\|_{ L^p(\rr^n,|x|^{|\beta|};\ L^1(\rr^m,|y|^{|\gamma|}))}.
\end{align*}
The proof is now complete.
\end{proof}

We now apply this new decomposition to obtain an asymptotic expansion for 
the solutions of \eqref{heat.an}, and prove Theorem \ref{culemea}.

\begin{proof}[Proof of Theorem \ref{culemea}]
We now apply Lemma \ref{paso2}  to our initial data $f\in L^1(\rr^N,1+|x|^{k+1}+|y|^{\frac{k+1}2})$. Then
\begin{align*}
\Big\|G_t\ast & f-  \sum _{|\beta|+2|\gamma|\leq k}\frac{(-1)^{|(\beta,\gamma)}}{\beta !\gamma!}\Big (\int _{\rr^N} f(x)x^\beta  y^\gamma dxdy\Big)D_{xy}^{\beta \gamma}G_t \Big\|_{L^1(\rr^N)}\\
&\leq \sum _{ |\gamma|\leq (k-1)/2}
\sum _{|\beta|+2|\gamma|=k+1}\Big \| G_t\ast  \Big( D^\beta_x [\rem f]_{\beta \gamma}(x)D_y^\gamma \delta_0(y)\Big)\Big\|_{L^1(\rr^N)}\\
&\quad \quad +\sum _{|\gamma|=(k+1)/2}\|G_t\ast D^\gamma_y F_\gamma(x,y)\|_{L^1(\rr^N)}\\
&\leq\sum _{ |\gamma|\leq (k-1)/2}
\sum _{|\beta|+2|\gamma|=k+1} \| D^\beta_x D^\gamma _yG_t (\cdot,y)\ast_x   [\rem f]_{\beta \gamma}(\cdot)\|_{L^1(\rr^N)}\\
&\quad \quad +\sum _{|\gamma|=(k+1)/2}\|D^\gamma_yG_t\ast  F_\gamma(x,y)\|_{L^1(\rr^N)}\\
&\leq  \sum _{0\leq |\gamma|\leq k}
\sum _{|\beta|= k+1-|\gamma|}\Big\| \|D_x^\beta D_y^\gamma G_t \|_{L^1_x(\rr^m)} \|[\rem f]_{\beta \gamma}\|_{L^1_x(\rr^m)}\Big \|_{L^1_y(\rr^n)}+|t|^{-\frac{k+1}4}\sum _{|\gamma|=(k+1)/2} \|F_\gamma\|_{L^1(\rr^N)}\\
&\leq \sum _{ |\gamma|\leq (k-1)/2}
\sum _{|\beta|+2|\gamma|=k+1} \|[ \rem f]_{\beta \gamma}\|_{L^1_x}\|D_x^\beta D_y^\gamma G_t\|_{L^1(\rr^N)} +|t|^{-\frac{k+1}4}
\||y|^{\frac{k+1}2}f\|_{L^1(\rr^N)}\\
&\leq  \sum _{ |\gamma|\leq (k-1)/2}
\sum _{|\beta|+2|\gamma|=k+1} |t|^{-\frac {|\beta|+2|\gamma|}4 }\|f\|_{ L^1(\rr^N,|x|^{|\beta|}|y|^{|\gamma|})}+|t|^{-\frac{k+1}4}\||y|^{\frac{k+1}2}f\|_{L^1(\rr^N)}\\
&\leq t^{-\frac{k+1}4} \|(1+|x|^{k+1}+|y|^{\frac{k+1}2}) f\|_{L^1(\rr^N)}.
\end{align*}
The proof of Theorem \ref{culemea} is now complete.
\end{proof}

\begin{remark}
Remark that  all the terms in the left hand side of the  main estimate of Theorem \ref{culemea}
have a slower decay than those of the right hand side. Also observe that, with respect to Theorem \ref{culemaDZ}, in Theorem \ref{culemea} we have assumed only integrability of  $(k+1)/2$ moments in the variable $y$.
\end{remark}

\section{The Heisenberg Group}\label{heis}

First of all we give the definition and describe the main aspects of  the Heisenberg group. Given $(z,\th)$ of $\rr^{2n+1}$ in the form
$(z,\theta)=(z_1,\dots,z_n,z_{n+1},\dots,z_{2n},\th)$, the composition law on the group is given by 
$$(z,\th)\circ (z',\th')=(z+z',\th+\th'+2\sum_{j=1}^n (z_{n+j}z_j' -z_jz'_{n+j})).$$
In this way $(\rr^{2n+1},\circ)$ is a Lie group, whose identity element is the origin $(z,\theta)= (0, 0)$, the inverse being given by 
$(z,\th)^{-1}=(-z,-\th)$.

Let us now consider the dilations 
$$\delta_\lambda :\rr^{2n+1}\rightarrow \rr^{2n+1}, \ \delta_\lambda(z,\th)=(\lambda z,\lambda^2 \th).$$
We have that $\delta_\lambda$ is an automorphism on $(\rr^{2n+1},\circ)$ for every $\lambda>0$. In this way $\mathbb{H}^n=(\rr^{2n+1},\circ,\delta_\lambda)$ is a homogeneous space.
The Lie algebra on $\mathbb{H}^n$ is given by the left invariant vector fields
$$Z_j=\partial_{z_j}+2z_{n+j}\partial _\th,\ Z_{n+j}=\partial _{z_{n+j}}-2z_j\partial _\th, \ j=1,\dots, n, \Theta=\partial _\th.$$

The convolution product of two functions $f$ and $g$ on $\hh$ is defined by
$$(f\ast g)(w)=\int _{\hh} f(w\circ v^{-1} )g(v)dv=\int _{\hh} f(v)g(v^{-1}\circ w)dw,$$
where $v^{-1}$ is the inverse of element $v$ with respect to the group operation $\circ$ on $\hh$.
Here, $dw$ is the Haar measure on $\hh$ which is exactly the euclidian measure on $\rr^{2n+1}$.
It should be emphasized that the convolution on the Heisenberg group is not commutative. However if $P$ is a left invariant vector field on $\hh$ then one sees easily that
$$P(f_1\ast f_2)=f_1\ast Pf_2=Pf_1\ast f_2.$$

The canonical sub-laplacian on $\mathbb{H}^n$ is given by 
$$\Delta_{\mathbb{H}^n}=\sum _{j=1}^{2n}Z_j^2.$$
In the case of the Heisenberg group $\mathbb{H}^n$ the exponential and logarithmic maps (see \cite{MR2363343}, Ch. 1, p.~48, for the definition of the ${\rm Exp}$ and ${\rm Log}$ maps on  general homogeneous Lie groups) are given by  (see \cite{MR2363343}, p.~167 and Remark 3.2.4 on p.~163)
\begin{equation}\label{exp}
{\rm Exp}\Big (\sum _{j=1}^{2n}  z_j Z_j+\th \Theta \Big)=(z,\theta), z=(z_1,\dots,z_{2n})
\end{equation}
and
\begin{equation}\label{log}
{\rm Log}(z,\th)=\sum _{j=1}^{2n}  z_j Z_j+\th \Theta.
\end{equation}


We point out that properties \eqref{exp} and \eqref{log} are not true in general for homogeneous Lie groups as we can see in the following example.
Following \cite{MR2363343}, Section 3.2, p.~158 let us set $\rr^N=\rr^m\times \rr^n$  and denote its points $z=(x,\theta)$ with $x\in \rr^m$ and $\theta\in \rr^n$. Given a $m\times m$ matrix $B$ with real entries let 
$$(x,\theta )\circ (\xi, \tau)=(x+\xi,\theta+\tau+\frac 12 \la Bx, \xi \ra),$$
where $\la \cdot ,\cdot  \ra$ stands for the inner product in $\rr^m$. Again $\delta_\lambda :\rr^{N}\rightarrow \rr^{N}, \ \delta_\lambda(x,\theta)=(\lambda x,\lambda^2 \theta)$ is an automorphism of $(\rr^N,\circ)$ for any $\lambda >0$. Then $(\rr^N,\circ, \delta_\lambda)$
is a homogeneous Lie group of step two. In this case the $\rm{Exp}$ and $\rm {Log}$ functions are given by (see (3.11), p.~167 in \cite{MR2363343})
\[ {\rm Exp} ((\xi,\tau)\cdot Z) = (\xi, \tau +\frac 14 \la B\xi ,\xi\ra)
\]
and
\[ {\rm Log} (x,\theta) = (x,\theta -\frac 14 \la B\xi ,\xi\ra)\cdot Z
\]
where
\[(\xi,\tau)\cdot Z= \sum _{j=1}^m \xi _jX_j +\sum _{i=1}^n \tau _i\Theta_i
\]
and $X_1,\dots,X_m, \Theta_1,\dots, \Theta_n$ is the Jacobian basis. 

We can see that \eqref{exp} and \eqref{log} hold if and only if $B$ is a skew-adjoint matrix. In the case of $\mathbb{H}^n$ this holds since the matrix $B$ is given by
\[B=4
\left(\begin{array}{cc}0 & \mathbb{I}_n \\-\mathbb{I}_n & 0\end{array}\right).
\]
In the particular case of skew-symmetric matrices $B$ we believe  that the results of this section are still valid.

\medskip

We now recall some results concerning Taylor's formula with integral remainder on homogenous Carnot groups $(\mathbb{G},\circ)$.

\begin{lemma}[Lemma 20.3.8, \cite{MR2363343}]\label{taylor.1}
Let $x,g\in \mathbb{G}$ and $u\in C^{m}(\mathbb G,\rr)$. Then we have
\begin{equation}\label{tay.1}
u(x\circ h )=\sum _{k=0}^{m}\frac {1}{k!}(({\rm Log}\ h)^k u)(x)+
\frac 1 {m!}\int _0^1 (1-s)^m (({\rm Log}\ h)^{m+1} u)(x\circ {\rm Exp} (s {\rm Log}\ h))ds.
\end{equation}
\end{lemma}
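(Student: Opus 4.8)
The plan is to reduce the group Taylor formula \eqref{tay.1} to the classical one-variable Taylor formula with integral remainder, taken along the flow of the left-invariant vector field attached to $h$. Set $X:={\rm Log}\ h$, the element of the Lie algebra of $\mathbb{G}$ (equivalently, the left-invariant first-order differential operator) associated with $h$ by the logarithmic map; its coefficients are constants depending only on $h$. First I would introduce the scalar function
\[
\phi(s):=u\big(x\circ {\rm Exp}(sX)\big),\qquad s\in[0,1].
\]
Since ${\rm Exp}$ and ${\rm Log}$ are mutually inverse, $\phi(1)=u\big(x\circ {\rm Exp}({\rm Log}\ h)\big)=u(x\circ h)$ while $\phi(0)=u(x)$, so $\phi$ interpolates precisely between the two sides of the formula.

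The key step is to identify the successive derivatives of $\phi$ with iterated applications of $X$ to $u$. Because $X$ is left-invariant, the curve $s\mapsto x\circ{\rm Exp}(sX)$ is exactly the integral curve of $X$ issuing from $x$: indeed $s\mapsto{\rm Exp}(sX)$ is the one-parameter subgroup generated by $X$, and left translation by $x$ carries it to the integral curve through $x$. Consequently, for any smooth $v$ one has
\[
\frac{d}{ds}\,v\big(x\circ{\rm Exp}(sX)\big)=(Xv)\big(x\circ{\rm Exp}(sX)\big).
\]
Applying this identity with $v=X^k u$ and inducting on $k$ yields
\[
\phi^{(k)}(s)=\big(({\rm Log}\ h)^k u\big)\big(x\circ{\rm Exp}(sX)\big),\qquad 0\le k\le m+1,
\]
and in particular $\phi^{(k)}(0)=\big(({\rm Log}\ h)^k u\big)(x)$.

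Finally I would invoke the classical one-dimensional Taylor expansion with integral remainder on $[0,1]$,
\[
\phi(1)=\sum_{k=0}^{m}\frac{\phi^{(k)}(0)}{k!}+\frac{1}{m!}\int_0^1(1-s)^m\,\phi^{(m+1)}(s)\,ds,
\]
valid once $\phi\in C^{m+1}$, which follows from the smoothness of $u$. Substituting the values of $\phi(1)$, $\phi^{(k)}(0)$ and $\phi^{(m+1)}(s)$ computed above reproduces \eqref{tay.1} verbatim. The main obstacle is the second step, namely the identity $\frac{d}{ds}\,v\big(x\circ{\rm Exp}(sX)\big)=(Xv)\big(x\circ{\rm Exp}(sX)\big)$, which encodes the defining relationship between a left-invariant vector field, its flow, and the exponential map. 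On a homogeneous Carnot group this is clean, since ${\rm Exp}$ is a global (polynomial) diffeomorphism and $s\mapsto{\rm Exp}(sX)$ is a genuine one-parameter subgroup, so that left-invariance transports the flow from the identity to $x$ by composition on the left; once this is in hand, the remainder of the argument is the elementary one-variable formula.
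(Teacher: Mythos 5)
Your proposal is correct and coincides with the standard proof: the paper itself states this lemma without proof, quoting it from \cite{MR2363343}, and your reduction to the classical one-variable Taylor formula with integral remainder along $\phi(s)=u\big(x\circ{\rm Exp}(s\,{\rm Log}\ h)\big)$, using left-invariance to get $\phi^{(k)}(s)=\big(({\rm Log}\ h)^k u\big)\big(x\circ{\rm Exp}(s\,{\rm Log}\ h)\big)$, is exactly the argument in that reference. Two minor remarks: the hypothesis should read $u\in C^{m+1}(\mathbb{G},\rr)$ (as you implicitly require for $\phi\in C^{m+1}$, and as the paper's own Lemma \ref{taylor.2} assumes), since the remainder involves $m+1$ derivatives; and ${\rm Log}\ h$ is a constant-coefficient combination of the left-invariant basis fields but, as a differential operator in Euclidean coordinates, has polynomial rather than constant coefficients --- a slip of phrasing that does not affect your argument, which uses only left-invariance.
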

Let us now apply this Lemma to the case of the Heisenberg group $\hh$.
Using the structure on $\mathbb{H}^n$ we obtain that for any $h\in \mathbb{H}^n$ and $s\in \rr$ the following holds 
$${\rm Exp} (s {\rm Log}\ h)=sh.$$
Thus Lemma \ref{taylor.1}  (see Corollary 20.3. 9 in \cite{MR2363343}, p.~755) gives us the following:
\begin{lemma}\label{taylor.2}
Let $x,h\in\mathbb{H}^n$ and $u\in C^{m+1}(\mathbb{H}^n,\rr)$. Then we have
\begin{align*}
u(x\circ h )=&\sum _{k=0}^{m} \sum _{i_1,\dots, i_k=1}^{2n+1} \frac {h_{i_1}\dots h_{i_k}} {k!} (Z_{i_1}\dots Z_{i_k}u)(x)\\
&+\sum _{i_1,\dots, i_{m+1}=1}^{2n+1} \frac {h_{i_1}\dots h_{i_{m+1}}} {(m+1)!}
\int _0^1 (1-s)^m(Z_{i_1}\dots Z_{i_{m+1}}u)( x\circ sh)ds,
\end{align*}
where $h_{2n+1}=\theta$  and $Z_{2n+1}=\Theta$.
\end{lemma}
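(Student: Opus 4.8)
The plan is to specialize the general Carnot-group Taylor formula of Lemma~\ref{taylor.1}, taken with $\mathbb{G}=\hh$, to the concrete structure of the Heisenberg group. Written out, formula \eqref{tay.1} already has exactly the shape of the asserted identity: a sum over $k=0,\dots,m$ of operator powers $({\rm Log}\,h)^k$ applied to $u$ at $x$, plus an integral remainder built from $({\rm Log}\,h)^{m+1}$ evaluated along $x\circ {\rm Exp}(s\,{\rm Log}\,h)$. All that remains is to make the two abstract objects $({\rm Log}\,h)^k$ and ${\rm Exp}(s\,{\rm Log}\,h)$ explicit using the Heisenberg-specific data, namely the expression \eqref{log} for $\mathrm{Log}$ and the identity ${\rm Exp}(s\,{\rm Log}\,h)=sh$ recorded above.

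First I would rewrite the differential operator ${\rm Log}\,h$. By \eqref{log}, with the conventions $h_{2n+1}=\theta$ and $Z_{2n+1}=\Theta$, one has $\mathrm{Log}\,h=\sum_{i=1}^{2n+1}h_iZ_i$, a left-invariant first-order operator whose coefficients $h_i$ are constants. Since these coefficients are scalar they pull out of every composition, so that
\[
({\rm Log}\,h)^k=\Big(\sum_{i=1}^{2n+1}h_iZ_i\Big)^k=\sum_{i_1,\dots,i_k=1}^{2n+1}h_{i_1}\cdots h_{i_k}\,Z_{i_1}\cdots Z_{i_k}.
\]
Substituting this for $k=0,\dots,m$ into the polynomial part of \eqref{tay.1} (with the convention $({\rm Log}\,h)^0u=u$) reproduces the first line of the statement, the constants $1/k!$ passing through unchanged from \eqref{tay.1}. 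The one point to watch is that the $Z_i$ do not commute, so the expansion must be kept as the full ordered sum over all multi-indices $(i_1,\dots,i_k)$, with no multinomial coefficients collapsing repeated indices; it is precisely this uncollapsed, order-preserving sum that matches the double sum in the claimed formula.

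Finally, for the remainder I would invoke the group-specific identity ${\rm Exp}(s\,{\rm Log}\,h)=sh$, which turns the base point $x\circ {\rm Exp}(s\,{\rm Log}\,h)$ into $x\circ sh$. Expanding $({\rm Log}\,h)^{m+1}$ as above and pulling the scalars $h_{i_1}\cdots h_{i_{m+1}}$, which are independent of the integration variable $s$, outside the integral then rewrites the remainder of \eqref{tay.1} as the asserted finite sum of the terms $h_{i_1}\cdots h_{i_{m+1}}\int_0^1(1-s)^m (Z_{i_1}\cdots Z_{i_{m+1}}u)(x\circ sh)\,ds$, with the normalizing constant descending directly from the factor in front of the integral in \eqref{tay.1}. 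Thus the only genuinely group-theoretic inputs are \eqref{log} and ${\rm Exp}(s\,{\rm Log}\,h)=sh$; everything else is the algebra of expanding a power of a constant-coefficient first-order operator, and the sole source of error to guard against is the non-commutativity of the $Z_i$. There is no analytic difficulty beyond the $C^{m+1}$ regularity already required in Lemma~\ref{taylor.1}.
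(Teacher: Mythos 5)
Your proposal is correct and follows essentially the same route as the paper: the authors likewise specialize the Carnot-group Taylor formula of Lemma~\ref{taylor.1} (cf.\ Corollary 20.3.9 in \cite{MR2363343}), using \eqref{log} to write ${\rm Log}\,h=\sum_{i=1}^{2n+1}h_iZ_i$ and the Heisenberg-specific identity ${\rm Exp}(s\,{\rm Log}\,h)=sh$ to obtain the base point $x\circ sh$ in the remainder. Your explicit attention to keeping the expansion of $({\rm Log}\,h)^k$ as the full ordered sum over $(i_1,\dots,i_k)$, since the $Z_i$ do not commute, is exactly the right (and only) care needed, and it matches what the paper leaves implicit.
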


We now follow the same ideas as in the case of the anisotropic heat equation making partial decompositions in some of the involved variables. Here, in the context of the Heisenbergh group, variables $z=(z_1,\dots, z_{2n})$ and $z_{2n+1}=\theta$ will be treated separately. For simplicity   we treat here only the case $m=1$.

\begin{lemma}\label{dec.heis}
For any $\vp\in \mathcal{D}(\rr^{2n+1})$ the following identity holds:
\begin{align*}
\vp(z,\th)=&\vp(0,0)+\sum _{j=1}^{2n}z_j (Z_j \vp)(0,0)\\
&+\int _0^1 (1-s)\sum _{j,k=1}^{2n} z_jz_k (Z_jZ_k\vp)(sz,0)ds+\int _0^1 \th (\Theta \vp )((z,s\th))ds.
\end{align*}
\end{lemma}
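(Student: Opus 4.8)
The plan is to carry out two Taylor expansions in a prescribed order that respects the anisotropy of $\mathbb{H}^n$: under the dilations $\delta_\lambda(z,\theta)=(\lambda z,\lambda^2\theta)$ the variable $\theta$ has homogeneous degree two while the $z_j$ have degree one, so I expand only to \emph{first} order in $\theta$ but to \emph{second} order in $z$. Concretely, I first strip off the $\theta$-dependence by the fundamental theorem of calculus, isolating the horizontal slice $\varphi(\cdot,0)$, and then expand that slice in the $z$-variables by the group Taylor formula. Performing the $\theta$-step first is what keeps the full $z$-dependence inside the last remainder and avoids the mixed $\Theta Z_j$ cross-terms that a single naive application of the group Taylor formula to the full increment $(z,\theta)$ would produce.

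First I would expand in $\theta$. Since $\Theta=\partial_\theta$ literally, for fixed $z$ the fundamental theorem of calculus together with the change of variables $\tau=s\theta$ gives
\[
\varphi(z,\theta)=\varphi(z,0)+\int_0^\theta(\partial_\theta\varphi)(z,\tau)\,d\tau=\varphi(z,0)+\int_0^1\theta\,(\Theta\varphi)((z,s\theta))\,ds.
\]
This already produces the last term of the claimed identity and reduces the problem to expanding $\varphi(\cdot,0)$.

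Next I would expand $\varphi(\cdot,0)$ in the $z$-variables to second order using Lemma \ref{taylor.2} (equivalently Lemma \ref{taylor.1}). Writing $(z,0)=(0,0)\circ(z,0)$, which uses that $(0,0)$ is the identity of $\mathbb{H}^n$, I apply the formula with $x=(0,0)$, $h=(z,0)$ and expansion order one. Because the last coordinate of $h$ vanishes, $h_{2n+1}=0$, every index equal to $2n+1$ drops out of all sums, so only the fields $Z_1,\dots,Z_{2n}$ survive; moreover $sh=(sz,0)$, using the identity $\mathrm{Exp}(s\,\mathrm{Log}\,h)=sh$, and $(0,0)\circ(sz,0)=(sz,0)$. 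Hence
\[
\varphi(z,0)=\varphi(0,0)+\sum_{j=1}^{2n}z_j(Z_j\varphi)(0,0)+\int_0^1(1-s)\sum_{j,k=1}^{2n}z_jz_k(Z_jZ_k\varphi)(sz,0)\,ds.
\]
Substituting this into the output of the first step yields the stated identity verbatim.

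The step requiring the most care is the normalization of the quadratic remainder and the evaluation points. The correct constant is obtained from the one-dimensional identity $\psi(1)=\psi(0)+\psi'(0)+\int_0^1(1-s)\psi''(s)\,ds$ applied to $\psi(s)=\varphi((0,0)\circ sh)$, using $\psi'(s)=(\mathrm{Log}\,h\,\varphi)((0,0)\circ sh)$ and $\psi''(s)=((\mathrm{Log}\,h)^2\varphi)((0,0)\circ sh)$; this gives the remainder $\int_0^1(1-s)(\cdots)\,ds$ with no extra factor, matching the $\tfrac1{m!}$ of Lemma \ref{taylor.1} at $m=1$ and the absence of a factor $\tfrac12$ in the claimed formula. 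The only other point to verify is that the fields are evaluated at the ordinary point $(sz,0)$ rather than at a dilated point, which is precisely where the identities $\mathrm{Exp}(s\,\mathrm{Log}\,h)=sh$ and the left-identity property of $(0,0)$ are used.
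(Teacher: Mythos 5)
Your proof is correct and follows essentially the same route as the paper: first peel off the $\theta$-dependence (the paper does this via Lemma \ref{taylor.2} with $x=(z,0)$, $h=(0,\theta)$, which reduces to exactly your fundamental-theorem-of-calculus computation since $(z,0)\circ(0,s\theta)=(z,s\theta)$ and $\Theta=\partial_\theta$), then expand $\varphi(\cdot,0)$ to second order with $x=(0,0)$, $h=(z,0)$. Your normalization check via $\psi(1)=\psi(0)+\psi'(0)+\int_0^1(1-s)\psi''(s)\,ds$ is also the right one, correctly yielding the factor $\tfrac{1}{m!}$ of Lemma \ref{taylor.1} (with $m=1$) rather than the $\tfrac{1}{(m+1)!}$ misprinted in the remainder of Lemma \ref{taylor.2}.
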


\begin{remark}
We emphasize the difference with formula (20.63)  in \cite{MR2363343}, p.~761:
\begin{align*}
\vp(z,\th)=&\vp(0,0)+\sum _{j=1}^{2n}z_j (Z_j \vp)(0,0)+\th (\Theta \vp) (0,0)+\sum _{j,k=1}^{2n} \int _0^1 (1-s)z_jz_k (Z_jZ_k\vp)(sz,s\th)ds.
\end{align*}
\end{remark}

\begin{proof}
We apply Lemma \ref{taylor.2}  to $x=(z,0)$ and $h=(0,\th)$, elements from $\rr^{2n}\times \rr$.  Thus
\[
\vp(z,\th)=\vp (z,0)+\int _0^1 (\th \Theta \vp )((z,0)\circ (0,s\th))ds=\vp (z,0)+\int _0^1 \th (\Theta\vp )((z,s\th))ds.
\]
We now apply Lemma  \ref{taylor.2}  with $x=(0,0)$ and $h=(z,0)$:
\begin{align*}
\vp(z,0)=\vp(0,0)+\sum _{j=1}^{2n}z_j (Z_j \vp)(0,0)+\int _0^1 (1-s)\sum _{j,k=1}^{2n} z_jz_k (Z_jZ_k\vp)(sz,0)ds.
\end{align*}
Putting  toghether the above identities we obtain that
\begin{align*}
\vp(z,t)=&\vp(0,0)+\sum _{j=1}^{2n}z_j (Z_j \vp)(0,0)+\int _0^1 (1-s)\sum _{j,k=1}^{2n} z_jz_k (Z_jZ_k\vp)(sz,0)ds\\
&+\int _0^1 \th (\Theta\vp )((z,s\th))ds
\end{align*}
and the proof is now finished.
\end{proof}

We now apply the Taylor like identity in Lemma \ref{dec.heis} to obtain a decomposition of a function $f$ into Dirac's delta basis adapted to the Lie algebra on $\hh$.
\begin{theorem}\label{dec.nou.H}
For any $f\in L^1(\rr^{2n+1},1+|z|^2+|\th|)$ the following decomposition holds:
\begin{align}\label{dec.H}
f=\Big(\int _{\rr^{2n+1}} f\Big) \delta _0 + &\sum_{j=1}^{2n} \Big (\int_{\rr^{2n+1}}  z_j f(z,\th)dzd\th \Big)Z_j \delta _0+ (\Theta F) + \sum _{j,k=1}^{2n} Z_jZ_k \big(F_{jk}\delta _0(\th)\big),
\end{align}
 where
 \begin{equation}\label{F}
F(z,\th)=-\int _0^1 \frac \th s f(z,\frac \th s)ds
\end{equation}
and
\begin{equation}\label{Fjk}
F_{jk}(z)=\int _{\rr^{}} \int _0^1  (1-s) \frac {z_j}s\frac{z_k}s f(\frac zs,\frac \th s)\frac{ds d\th}{s^{2n}}.
\end{equation}
Moreover,
$$\|F\|_{L^1(\rr^{2n+1})}\leq \|\th f\|_{L^1(\rr^{2n+1})}$$
and
$$\|F_{jk}\|_{L^1(\rr^{2n+1})}\leq \|z_jz_k f\|_{L^1(\rr^{2n+1})}.$$
\end{theorem}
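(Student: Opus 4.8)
The plan is to prove the distributional identity \eqref{dec.H} in $\md'(\rr^{2n+1})$ by pairing both sides against an arbitrary test function $\vp\in\md(\rr^{2n+1})$ and inserting the group Taylor expansion of Lemma \ref{dec.heis}. I would first carry this out for $f\in\md(\rr^{2n+1})$, where every integral converges absolutely and Fubini applies freely, and then pass to the stated class $f\in L^1(\rr^{2n+1},1+|z|^2+|\th|)$ by density, the two norm bounds on $F$ and $F_{jk}$ being exactly what is needed to control the remainders in the limit. The one structural fact used repeatedly is that each left-invariant field $Z_j$ is divergence free, since its $\partial_\th$-coefficient ($2z_{n+j}$ or $-2z_j$) does not depend on $\th$; hence $Z_j^{t}=-Z_j$ on $\md$, so that $\langle Z_j\delta_0,\vp\rangle=-(Z_j\vp)(0,0)$ and, using $(Z_jZ_k)^{t}=Z_k^{t}Z_j^{t}=Z_kZ_j$, one has $\langle Z_jZ_k(G\,\delta_0(\th)),\vp\rangle=\int_{\rr^{2n}}G(z)\,(Z_kZ_j\vp)(z,0)\,dz$ for any $G=G(z)$. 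This is how the four terms on the right of \eqref{dec.H} are read against $\vp$.

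Writing $\langle f,\vp\rangle=\int_{\rr^{2n+1}}f\vp$ and substituting Lemma \ref{dec.heis}, the integrand breaks into four pieces. The constant term gives $\big(\int f\big)\vp(0,0)$, matching the $\delta_0$ contribution; the linear term gives $\sum_{j}\big(\int z_jf\big)(Z_j\vp)(0,0)$, matching the $Z_j\delta_0$ contribution through the skew-adjointness above (which is where the signs of the coefficients originate). It then remains to identify the two genuine remainders with $\Theta F$ and with $\sum_{j,k}Z_jZ_k(F_{jk}\delta_0(\th))$.

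For the $\th$-remainder $\int f(z,\th)\int_0^1\th\,(\Theta\vp)(z,s\th)\,ds\,dz\,d\th$ I would apply Fubini and the substitution $\th\mapsto\th/s$ at fixed $(z,s)$, whose one-dimensional Jacobian is $s$; the resulting $s$-powers recast the quantity as $-\int F\,\Theta\vp=\langle\Theta F,\vp\rangle$, which reads off $F$ as in \eqref{F}. For the quadratic term the decisive point is that $(Z_jZ_k\vp)$ is evaluated at $\th=0$, so integrating $f$ in $\th$ produces its marginal $g(z)=\int_\rr f(z,\th)\,d\th$ and the whole expression concentrates on $\{\th=0\}$, which is the source of the factor $\delta_0(\th)$; the Hardy-type substitution $z\mapsto z/s$ in $\rr^{2n}$ (Jacobian $s^{2n}$), exactly as in the proof of Lemma \ref{my.descoposition}, then yields $F_{jk}$ as in \eqref{Fjk}. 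Here the non-commutativity $[Z_j,Z_k]\neq 0$ is harmless: the coefficient $z_jz_k$ is symmetric in $(j,k)$, so only the symmetric part $\tfrac12(Z_jZ_k+Z_kZ_j)$ contributes to the sum and the ordering of the fields is immaterial.

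Finally, the two norm estimates follow from Minkowski's integral inequality together with the same changes of variables $(z,\th)\mapsto(z/s,\th/s)$: the Jacobians $s^{2n}$ in $z$ and $s$ in $\th$ combine with the explicit powers of $s$ in \eqref{F} and \eqref{Fjk} to leave a convergent $\int_0^1(\cdots)\,ds$ in the scaling parameter, dominated by $\|\th f\|_{L^1}$ and $\|z_jz_kf\|_{L^1}$ respectively. I expect the quadratic remainder to be the main obstacle: one must simultaneously handle the concentration onto $\{\th=0\}$, the non-commutativity of the fields, and, most delicately, the bookkeeping of the anisotropic scaling in which $z$ has homogeneity one while $\th$ has homogeneity two. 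It is precisely this anisotropy that dictates the powers of $s$ in the definitions of $F$ and $F_{jk}$, and getting them right is the crux of the argument.
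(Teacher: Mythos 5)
Your proposal is correct and follows essentially the same route as the paper's proof: pair $f$ against a test function, insert the Taylor identity of Lemma \ref{dec.heis}, read off the zeroth and first order terms via the skew-adjointness $Z_j^{t}=-Z_j$, perform the substitutions $z\mapsto z/s$ (Jacobian $s^{2n}$) and $\th\mapsto \th/s$ to identify $F_{jk}$ and $F$, and obtain the $L^1$ bounds by the same changes of variables, extending to $f\in L^1(\rr^{2n+1},1+|z|^2+|\th|)$ by density. Your explicit observation that the symmetry of the coefficient $z_jz_k$, hence of $F_{jk}$, makes the ordering $Z_jZ_k$ versus $Z_kZ_j$ immaterial when transposing onto the test function is precisely the justification the paper uses implicitly in the step $\la F_{jk}\delta_0(\th),Z_jZ_k\vp\ra=\la Z_jZ_k(F_{jk}\delta_0(\th)),\vp\ra$.
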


\begin{proof}
First of all it is sufficient to prove the above theorem for smooth $f$ and then extend the result for any $f\in L^1(\rr^{2n+1},|z|^2+|\th|)$.
Let us choose $f$ and $\vp$ two smooth functions.
Applying Taylor's expansion obtained in Lemma \ref{dec.heis} we get the following:
\begin{align*}
\la  f,\vp \ra _{\mathcal{D}',\mathcal{D}}=&\int _{\rr^{2n+1}}f(z,\th)\Big(\vp(0,0)+\sum _{j=1}^{2n}z_j (Z_j \vp)(0,0)\Big)dzd\th\\
&+\int _{\rr^{2n+1}}f(z,\th)\int _0^1 (1-s)\sum _{j,k=1}^{2n} z_jz_k (Z_jZ_k\vp)(sz,0)dsdzd\th\\
&+\int _{\rr^{2n+1}}f(z,\th) \int _0^1 \th (\Theta \vp )(z,s\th)dsdzd\th\\
=& <\delta _{(0,0)},\vp> \int _{\rr^{2n+1}}f(z,\th)dzd\th - \sum _{j=1}^{2n} <Z_j\delta_{(0,0)},\vp> \int _{\rr^{2n+1}}f(z,\th)z_jdzd\th \\
&+\sum _{j,k=1}^{2n}\int _{\rr^{2n+1}} (Z_jZ_k\vp)(z,0)
\int _0^1 (1-s)f(\frac zs ,\th)\frac { z_j}s \frac {z_k}s \frac {ds}{s^{2n}}dzd\th\\
&+\int _{\rr^{2n+1}} (\Theta\vp )(z,\th) \int _0^1  \frac \th s f(z,\frac \th s) \frac {ds}s dzd\th.
\end{align*}
Observe now that
\begin{align*}
\int _{\rr^{2n+1}} &(Z_jZ_k\vp)(z,0)
\int _0^1 (1-s)f(\frac zs ,\th)\frac { z_j}s \frac {z_k}s \frac {ds}{s^{2n}}dzd\th\\
=&
\int _{\rr^{2n}} (Z_jZ_k\vp)(z,0)\Big[\int _{\rr}
\int _0^1 (1-s)f(\frac zs ,\th)\frac { z_j}s \frac {z_k}s \frac {dsd\th}{s^{2n}}\Big]dz\\
=&  \int _{\rr^{2n}} (Z_jZ_k\vp)(z,0) F_{jk}(z)dz= \la F_{jk}\delta _0(\th),Z_jZ_k \varphi \ra _{\mathcal{D}',\mathcal{D}}\\
=& \la Z_jZ_k \big( F_{jk}\delta_0(\th)\big),\varphi  \ra _{\mathcal{D}',\mathcal{D}}
\end{align*}
where 
$$F_{jk}(z)=\int _{\rr}
\int _0^1 (1-s)f(\frac zs ,\th)\frac { z_j}s \frac {z_k}s \frac {dsd\th}{s^{2n}}.$$
Denoting 
\[F(z,\th)=-\int _{\rr^{2n+1}} \frac \th s f(z,\frac \th s)ds
\]
we obtain the desired decomposition. The estimates of the $L^1$-norms of $F$ and  $F_{jk}$ immediately follow. 
\end{proof}

\medskip
We  now  apply the decomposition obtained in Theorem \ref{dec.nou.H} 
 to obtain the asymptotic behaviour of the solutions of the heat equation on the Heisenbergh group $\mathbb{H}^n$. 

%
%

\begin{proof}[Proof of Theorem \ref{asimp.heis}]
Using decomposition \eqref{dec.H} we have that
\begin{align*}
\Big\|u(z,\th,t)-&\Big(\int _{\rr^{2n+1}} f\Big) H_t + \sum_{j=1}^{2n} \Big (\int_{\rr^{2n+1}}  z_j  f(z,\th)\Big)Z_j H_t\Big\|_{L^1(\rr^{2n+1})}\\
&\leq \|\Theta F\ast H_t\|_{L^1(\rr^{2n+1})}+\sum _{j,k=1}^{2n}\|Z_jZ_k \big( F_{jk} \delta_0(\th)\big) \ast H_t \|_{L^1(\rr^{2n+1})}\\
&=\|F\ast \Theta H_t\|_{L^1(\rr^{2n+1})}+\sum _{j,k=1}^{2n}\| (F_{jk} \delta _0(\th))\ast (Z_jZ_k H_t) \|_{L^1(\rr^{2n+1})}\\
&\leq \|F\| _{L^1(\rr^{2n+1})}\| \Theta H_t\|_{L^1(\rr^{2n+1})}+\sum _{j,k=1}^{2n}\| F_{jk} \|_{L^1(\rr^{2n})} \| Z_jZ_k H_t\|_{L^1(\rr^{2n+1})}\\
&\leq \|\th f\|_{L^1(\rr^{2n+1})}+\sum _{j,k=1}^{2n} \|z_jz_k f\|_{L^1(\rr^{2n+1})}\leq \|(|\th|+|z|^2)f\|_{L^1(\rr^{2n+1})}.
\end{align*}
The proof of the decay rate property is now finished. 
\end{proof}

 {\bf
Acknowledgements.}

The first author was partially supported by Grant PN-II-ID-PCE-2011-3-0075 of the Romanian National Authority for Scientific Research, CNCS -- UEFISCDI, MTM2011-29306-C02-00, MICINN, Spain and ERC Advanced Grant FP7-246775 NUMERIWAVES.  

The second author
was partially supported by Grant MTM2011-29306-C02-00, MICINN, Spain, ERC Advanced Grant FP7-246775 NUMERIWAVES, ESF Research Networking Programme OPTPDE and Grant PI2010-04 of the Basque Government.

%
%
%

\end{document}